\documentclass[a4paper,10pt]{amsart}
\usepackage[utf8]{inputenc}
\usepackage{amssymb}
\usepackage{amsmath}
\usepackage{bbm}
\usepackage{mathrsfs}
\usepackage[all]{xy}
\usepackage{manfnt}
\usepackage{pigpen}
\usepackage{bm}
\usepackage{graphicx}
\usepackage{enumitem}
\usepackage[left=2cm,right=2cm,top=2cm,bottom=2cm]{geometry}
\usepackage{tikz}
%%%%%%%%%%%%%%%%%%%%%%%%%%%%%%%%%%%%%%%%%%%%%%%%
%opening
\title{Outer functors and a general operadic framework}
\date{}
\author{Geoffrey Powell}
\address{Univ Angers, CNRS, LAREMA, SFR MATHSTIC, F-49000 Angers, France}
\email{Geoffrey.Powell@math.cnrs.fr}
\urladdr{https://math.univ-angers.fr/~powell/}
%%%%%%%%%%%%%%%%%%%%%%%%%%%%%%%%%%%%%%%%%%%%%%%%%
\keywords{Functors on free groups; polynomial functor; analytic functor; outer functor; Lie operad; PROP; Lie algebra homology}
\subjclass[2000]{18A25, 18M70, 18M85, 13D03, 17B01}
%\thanks{}
%%%%%%%%%%%%%%%%%%%%%%%%%%%%%%%%%%%%%%%%%%%%%%%%%
\newtheorem{THM}{Theorem}
\newtheorem{PROP}[THM]{Proposition}

%%%%%%%%%%%%%%%%%%%%%%%%%%%%%%%%%%%%%%%%%%%%%%%%%
\newtheorem{thm}{Theorem}[section]
\newtheorem{prop}[thm]{Proposition}

\newtheorem{cor}[thm]{Corollary}
\newtheorem{lem}[thm]{Lemma}

%%%%%%%%%%%%%%%%%%%%%%%%%%%%%%
\theoremstyle{definition}
\newtheorem{defn}[thm]{Definition}
\newtheorem{exam}[thm]{Example}

%%%%%%%%%%%%%%%%%%%%%%%%%%%%%%
\theoremstyle{remark}
\newtheorem{rem}[thm]{Remark}
\newtheorem{nota}[thm]{Notation}
\newtheorem{hyp}[thm]{Hypothesis}

%%%%%%%%%%%%%%%%%%%%%%%%%%%%%%%%%%%%%%%%%%%%%%%%%%
\newcommand{\nt}{\mathrm{Nat}_{V \in \fvs}}
\newcommand{\fvs}{{\mathcal{V}_\rat^{\mathrm{f}}}}
\renewcommand{\phi}{\varphi}
\renewcommand{\hom}{\mathrm{Hom}}
\newcommand{\sym}{\mathfrak{S}}
\newcommand{\gr}{\mathbf{gr}}
\newcommand{\kmod}{\mathtt{Mod}_\kring}
\newcommand{\f}{\mathcal{F}}
\newcommand{\fcatk}[1][\calc]{\f (#1; \kring)}
\newcommand{\fcatQ}[1][\calc]{\f (#1; \rat)}
\newcommand{\nat}{\mathbb{N}}
\newcommand{\ab}{\mathbf{ab}}
\newcommand{\zed}{\mathbb{Z}}
\newcommand{\op}{^\mathrm{op}}
\newcommand{\ob}{\mathrm{Ob}\hspace{2pt}}
\newcommand{\kring}{\mathbbm{k}}

\newcommand{\fb}{{\bm{\Sigma}}}
\newcommand{\opd}{\mathscr{O}}
\newcommand{\fopd}{\f_\opd}

\newcommand{\lie}{\mathfrak{Lie}}
\newcommand{\leib}{\mathfrak{Leib}}
\newcommand{\uass}{\mathfrak{Ass}^u}
\newcommand{\conv}{\odot}
\newcommand{\rconv}{\stackrel{r}{\odot}}
\newcommand{\smodug}{{}_{\kring\fb}\mathtt{Mod}}
\newcommand{\smodopug}{\mathtt{Mod}_{\kring\fb}}
\newcommand{\cat}{\mathbf{Cat}\hspace{1pt}}
\newcommand{\flie}{\f_{\lie}}
\newcommand{\fleib}{\f_{\leib}}
\newcommand{\fppd}{\f_{\mathscr{P}}}

\newcommand{\g}{\mathfrak{g}}
\newcommand{\ppd}{\mathscr{P}}
\newcommand{\foutan}{\f^{\mathrm{Out}}_\omega (\gr\op; \kring)}
\newcommand{\foutanQ}{\f^{\mathrm{Out}}_\omega (\gr\op; \rat)}
\newcommand{\fout}{\f^{\mathrm{Out}}(\gr\op; \kring)}
\newcommand{\foutQ}{\f^{\mathrm{Out}}(\gr\op; \rat)}
\newcommand{\tprop}{\boxplus}
\newcommand{\id}{\mathrm{Id}}
\newcommand{\gropcatuass}{{}_{\Delta} \cat \uass}

\newcommand{\tbar}{\overline{\tau}}
\newcommand{\rbar}{\overline{\rho}}

\newcommand{\mut}{\widetilde{\mu}}
\newcommand{\fgrp}{\mathsf{G}}
\newcommand{\rmod}{\mathtt{Mod}_\lie}
\newcommand{\rmodO}{\mathtt{Mod}_{\cat\opd}}
\newcommand{\rat}{\mathbb{Q}}
\newcommand{\bimodO}{{}_{\cat\opd} \mathtt{Mod}_{\cat\opd}}
\newcommand{\ad}{\mathrm{ad}}

%%%%%%%%%%%%%%%%%%%%%%%%%%%%%%%%%%%%%%%%%%%%%%%%%%%%%%%%%%%%
\numberwithin{equation}{section}
\setcounter{tocdepth}{1}
%%%%%%%%%%%%%%%%%%%%%%%%%%%%%%%%%%%%%%%%%%%%%%%%%%%%%%%%%%%%
%%%%%%%%%%%%%%%%%%%%%%%%%%%%%%%%%%%%%%%%%%%%%%%%%%%%%%%%%%%%
\begin{document}

\begin{abstract}
For $\opd$ an operad in $\kring$-vector spaces,  $\fopd$  is defined to be the category of $\kring$-linear functors from the PROP associated to $\opd$ to $\kring$-vector spaces. Given a binary operation $\mu \in \opd (2)$ that satisfies a right Leibniz condition, the full subcategory $\fopd^\mu \subset \fopd$ is introduced here and its properties studied.

This is motivated by the case of the Lie operad $\lie$, where $\mu$ is taken to be the generator. By previous results of the author,  when $\kring = \rat$, $\flie$ is equivalent to the category of analytic functors on the opposite $\gr\op$ of the category $\gr$ of finitely-generated free groups. The main result shows that $\flie^\mu$ identifies with the category of {\em outer} analytic functors on $\gr\op$, as introduced in earlier work of the author with Vespa.

Using this identification, this theory has applications to the study of the higher Hochschild homology functors related to work of Turchin and Willwacher.
\end{abstract}

\maketitle

\section{Introduction}
\label{sect:intro}

This paper is motivated by the study of the category of {\em outer functors} on the category $\gr$ of finitely-generated free groups, that was introduced in the joint work with Christine Vespa \cite{PV}. Here we focus upon the category $\fcatk[\gr\op]$ of contravariant functors on $\gr$ with values in $\kring$-vector spaces. The category $\fout$ of outer functors is the full subcategory of  functors $F \in \ob \fcatk[\gr\op]$ for which, for all $n \in \nat$, the inner automorphism group $\mathrm{Inn} (\zed^{\star n}) \subset \mathrm{Aut} (\zed^{\star n}) $ acts trivially on $F(\zed^{\star n})$,  where $\zed^{\star n}$ is the free group on $n$ generators.

The motivating work, \cite{PV}, shows how outer functors appear naturally in connection with the study of the higher Hochschild homology of a wedge of circles, building upon ideas of Turchin and Willwacher \cite{MR3982870}. Subsequent work of Gadish and Hainaut \cite{2022arXiv220212494G} has considered related structures.

Outer functors also arise naturally in other contexts, for example in the work of  Katada \cite{MR4613613,2021arXiv210509072K}. This involves functors on $\gr\op$ constructed from Jacobi diagrams, based on constructions of Habiro and Massuyeau \cite{MR4321214} in their work on generalizations of the Kontsevich integral. These are polynomial functors and Katada's results show that they are outer functors; the framework presented here gives an alternative, natural approach to their study.  More generally, Vespa \cite{V_jac} has considered  functors associated to beaded Jacobi diagrams, thus investigating deeper structure appearing in \cite{MR4321214}. Using the results of this paper, she has analysed the cases for which these  are outer functors, recovering Katada's result as a special case.

There is a natural notion of polynomial functor for functors on $\gr\op$ that is defined using the symmetric monoidal structure on $\gr\op$ given by the free product of groups. Using this, one has the full subcategory of analytic functors $\f_\omega (\gr\op; \kring) \subset \fcatk[\gr\op]$, where a functor is analytic if it is the colimit of its polynomial subfunctors. Taking $\kring=\rat$, there is a `linear algebra' description of $\f_\omega (\gr\op; \rat)$ as the category $\flie$ of representations  of the category $\cat \lie$ associated to the Lie operad $\lie$ (see \cite{P_analytic}). This makes the study of $\f_\omega (\gr\op; \rat)$ much more accessible; for instance, the polynomial filtration can be read off directly when working with $\flie$.

The main purpose of this paper is to identify the full subcategory of $\flie$ that corresponds to the full subcategory of analytic outer functors $\foutanQ \subset \f_\omega (\gr\op; \rat)$. This allows analytic outer functors to be studied entirely within this framework.

It is useful to place this in  the following more  general context. For $\opd$ an operad in $\kring$-vector spaces, one considers the category $\fopd$ of representations of $\cat \opd$, where $\cat \opd$ is the $\kring$-linear category that is associated to $\opd$. The set of objects of $\cat\opd$ is $\nat$ and the category  comes equipped with a  symmetric monoidal structure $\tprop$ which corresponds to addition on objects, so that $n \tprop 1 = n+1$. In particular, this yields the  shift functor $\delta : \fopd \rightarrow \fopd$ that satisfies 
$ 
\delta F (n) = F (n+1) 
$ 
for $F \in \ob \fopd$.

For a binary operation $\mu \in \opd (2)$, for each $n \in \nat$,  a  natural $\kring$-linear map 
\[
\mut (n) : \delta F(n) 
\rightarrow 
F(n)
\]
is defined in Definition \ref{defn:mut}. For example, when $\opd$ is the Lie operad over $\kring = \rat$, by considering universal examples, $\mut (n)$ is  closely related to the right action of the free Lie algebra $\lie (V)$ on $\lie(V)^{\otimes n}$ given by the iterated tensor product of the right adjoint representation, considered naturally with respect to the $\rat$-vector space $V$ (cf. Theorem \ref{THM:proj_cover_mu} below).

In general, $\mut$  does not define a morphism of representations of $\cat\opd$. The following gives a necessary and sufficient condition for this to hold:

\begin{THM}
[Theorem \ref{thm:right_Leibniz}]
The morphisms $\mut(n)$ induce a natural transformation $\mut : \delta \rightarrow \mathrm{Id}$ if and only if $\mu$ satisfies the right Leibniz condition with respect to $\opd$.
\end{THM}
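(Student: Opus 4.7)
The plan is to verify naturality of the family $\mut(n)$ by checking the commutativity of the square
\[
\begin{array}{ccc}
F(m+1) & \xrightarrow{F(f \tprop \id_1)} & F(n+1) \\
{\scriptstyle \mut(m)}\downarrow & & \downarrow{\scriptstyle \mut(n)} \\
F(m) & \xrightarrow{F(f)} & F(n)
\end{array}
\]
on a generating family of morphisms $f : m \to n$ in $\cat \opd$. Since $\cat \opd$ is a $\kring$-linear symmetric monoidal category generated by (i) the symmetries in $\sym_n$ and (ii) the operations $\nu \in \opd(k)$ viewed as morphisms $k \to 1$, it suffices to verify naturality on these generators (suitably tensored with identities via $\tprop$); linearity and monoidal compatibility then propagate the result to all of $\cat \opd$.

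The symmetry case is routine: unpacking the definition of $\mut(n)$ as a sum over positions $i=1, \ldots, n$ of the action of morphisms $\mu_i^n : n+1 \to n$ --- each combining strand $i$ with the extra strand $n+1$ via $\mu$ --- permuting the first $n$ strands simply permutes the terms of the sum and recombines them on the target side. The substantive case is that of an operation $\nu \in \opd(k)$. Tracing both paths around the square and passing to the universal (representable) situation, the upper-right composite yields, as an element of $\opd(k+1)$, the operation $(y_1, \ldots, y_k, x) \mapsto \mu(\nu(y_1, \ldots, y_k), x)$, whereas the lower-left composite yields the sum $(y_1, \ldots, y_k, x) \mapsto \sum_{i=1}^k \nu(y_1, \ldots, \mu(y_i, x), \ldots, y_k)$. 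The equality of these two expressions, for every $\nu$ and every arity $k$, is by definition the right Leibniz condition on $\mu$ with respect to $\opd$.

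This yields the ``if'' direction. For the ``only if'' direction, specialise naturality to the free representation $F = \cat \opd (k+1, -)$ and evaluate the square on the identity of $k+1$, or equivalently invoke the Yoneda lemma applied to the generating morphism $\nu \tprop \id_1$; this extracts the Leibniz equation as a relation inside $\opd(k+1)$ itself. The main obstacle is the combinatorial bookkeeping associated with the permutations implicit in the $\mu_i^n$, since placing the extra strand adjacent to strand $i$ requires an explicit shuffle, and these shuffles must be tracked carefully under pre- and post-composition with $f \tprop \id_1$. Once the indexing is set up cleanly, however, both directions of the equivalence collapse onto the same single operadic identity.
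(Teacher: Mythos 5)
Your proposal is correct and follows essentially the same route as the paper: reduce naturality to a generating set of $\cat \opd$ as a PROP (symmetries, handled by the equivariance of $\mu_i(n)$, plus operations $\nu \tprop \id$), then use the universal example $F = \cat \opd(k+1,-)$ / Yoneda to convert the naturality square for $\nu \in \opd(k)$ into the operadic identity $\mu(\nu \tprop \id_1) = \sum_i \nu(\ldots,\mu(y_i,x),\ldots)$, which is precisely the right Leibniz condition. The paper's proof is organised in the same way, including the final remark that one may restrict to operadic generators of $\opd$.
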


When $\mu$ satisfies this  right Leibniz condition (for example, if $\opd = \lie$ and $\mu \in \lie (2)$ is the generator), then one defines 
\[
\fopd^\mu \subset \fopd
\]
to be the full subcategory of functors $F \in \ob \fopd$ for which  
$
\mut_F : \delta F \rightarrow F 
$ 
is zero.

This subcategory is highly non-trivial. For example, one has the following special case of  Proposition  \ref{prop:alpha_^mu}:

\begin{PROP}
Suppose that the operad $\opd$ is reduced, $\opd (1)= \kring$ and that $\mu$ satisfies the right Leibniz condition. Then   the simple objects of $\fopd$ belong to $\fopd^\mu$.
\end{PROP}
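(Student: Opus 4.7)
The plan is to show directly that for any simple $S \in \ob \fopd$, the morphism $\mut_S : \delta S \rightarrow S$ (which is a morphism in $\fopd$ by Theorem \ref{thm:right_Leibniz}, since $\mu$ satisfies the right Leibniz condition) vanishes. The argument has two steps: first I establish that, under reducedness, simples in $\fopd$ are concentrated in a single arity; second, I observe that $\delta$ shifts the support strictly downwards, so that $\mut_S$ is forced to vanish componentwise.

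For the first step, I exploit that reducedness of $\opd$ ensures $\hom_{\cat \opd}(m,n) = 0$ whenever $m < n$: a morphism in $\cat \opd$ from $m$ to $n$ is assembled from a map $[m] \rightarrow [n]$ decorated by operations of $\opd$, and those with any empty fibre contribute zero since $\opd(0) = 0$. Hence only surjections $[m] \twoheadrightarrow [n]$ contribute, and these require $m \geq n$. Given a simple $S \in \ob \fopd$, set $d := \min\{k \in \nat \mid S(k) \neq 0\}$. I then introduce the graded subobject $S' \subset S$ defined by $S'(d) = S(d)$ and $S'(k) = 0$ for $k \neq d$, and verify that it is a subfunctor of $S$: the only nontrivial case concerns morphisms $\phi : d \rightarrow m$ in $\cat \opd$, which force $m \leq d$; the image of $S(\phi)$ then lies in $S(m)$, which equals $S'(d)$ when $m = d$ and is zero by minimality of $d$ when $m < d$. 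Simplicity of $S$ forces $S' = S$, so $S$ is supported only in arity $d$.

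The second step is an immediate evaluation: the component $\mut_S(n) : S(n+1) \rightarrow S(n)$ has vanishing target for $n \neq d$, and vanishing source for $n = d$ (since $S(d+1) = 0$). Hence every component of $\mut_S$ is zero, so $S \in \fopd^\mu$.

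The principal obstacle is the combinatorial input used in the first step, namely the description of $\hom_{\cat \opd}(m,n)$ that makes visible the direction of arity; once this is in hand, the rest is formal. Note that the hypothesis $\opd(1) = \kring$ does not appear to play a role in this particular argument, and presumably enters elsewhere to pin down the simples more explicitly, e.g.\ as irreducible $\sym_d$-representations.
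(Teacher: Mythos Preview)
Your proof is correct and shares the paper's core mechanism: a simple object is supported in a single arity, and since $\delta$ shifts support down by one, $\mut_S$ must vanish for support reasons. The difference lies in how the single-arity support is established. The paper invokes the section $\alpha : \smodug \to \fopd$ of Proposition~\ref{prop:alpha} (whose existence uses $\opd(1) = \kring$ via the augmentation $\opd \to I$) and appeals to the fact that every simple lies in the image of $\alpha$. You instead argue directly from reducedness: the minimal nonzero arity carries a nonzero subfunctor, which by simplicity must be everything. Your route is slightly more self-contained and, as you correctly observe, shows that the hypothesis $\opd(1) = \kring$ is not actually needed for this particular conclusion; in the paper that hypothesis is used to construct $\alpha$ and thereby to exhibit a larger class of functors (the whole image of $\alpha$, not just the simples) lying in $\fopd^\mu$.
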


General properties are considered in Sections \ref{sect:mu} and \ref{sect:mu_properties}. For instance, 
combining Propositions \ref{prop:closure} and  \ref{prop:mu_left_adjoint} yields the following starting point for studying the homological relationship between $\fopd$ and $\fopd^\mu$:

\begin{THM}
\label{THM:fopd_mu_properties}
Suppose that $\mu \in \opd (2)$ satisfies the right Leibniz condition. 
\begin{enumerate}
\item 
The subcategory $\fopd^\mu$ is closed under the formation of subobjects, quotients and direct sums.
\item 
The inclusion $\fopd^\mu \subset \fopd$ admits a left adjoint
 $F \mapsto F^\mu$, where $F^\mu := \mathrm{coker} \ \mut_F$, hence:
 \begin{enumerate}
 \item 
$(-)^\mu$ is right exact;
\item 
$\fopd^\mu$ has enough projectives; 
\item 
$(-)^\mu$ preserves projectives. 
\end{enumerate} 
\end{enumerate}
\end{THM}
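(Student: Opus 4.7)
The plan rests on two structural observations. First, the shift $\delta : \fopd \to \fopd$ is exact and preserves arbitrary direct sums, since it coincides with restriction along $- \tprop 1 : \cat\opd \to \cat\opd$, and (co)limits in $\fopd$ are computed objectwise. Second, by the theorem stated above, $\mut : \delta \to \id$ is a natural transformation, so every morphism $f : F \to G$ in $\fopd$ satisfies $f \circ \mut_F = \mut_G \circ \delta f$.

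For part (1), I would deduce the three closure properties from this naturality identity. If $f : F \hookrightarrow G$ is monic with $\mut_G = 0$, then $f \circ \mut_F = 0$ forces $\mut_F = 0$. If $f : G \twoheadrightarrow F$ is epic with $\mut_G = 0$, then $\mut_F \circ \delta f = 0$, and exactness of $\delta$ makes $\delta f$ epic, so $\mut_F = 0$. Closure under direct sums follows by testing $\mut_{\bigoplus_i F_i}$ against each canonical inclusion $\delta F_j \hookrightarrow \delta(\bigoplus_i F_i) \cong \bigoplus_i \delta F_i$: naturality on $F_j \hookrightarrow \bigoplus_i F_i$ reduces the vanishing to that of $\mut_{F_j}$.

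For part (2), set $F^\mu := \mathrm{coker}\,\mut_F$ with canonical projection $\pi : F \twoheadrightarrow F^\mu$. Then $F^\mu \in \fopd^\mu$: naturality on $\pi$ gives $\mut_{F^\mu} \circ \delta \pi = \pi \circ \mut_F = 0$, and right exactness of $\delta$ makes $\delta \pi$ epic, forcing $\mut_{F^\mu} = 0$. The adjunction is then immediate, since any $g : F \to G$ with $G \in \fopd^\mu$ satisfies $g \circ \mut_F = \mut_G \circ \delta g = 0$ and so factors uniquely through $\pi$. The three corollaries follow formally: (a) left adjoints are right exact; part (1) shows the inclusion $\fopd^\mu \hookrightarrow \fopd$ is exact, so by the adjunction $\hom_{\fopd^\mu}(F^\mu, -)$ coincides with the restriction of $\hom_{\fopd}(F,-)$, whence $(-)^\mu$ preserves projectives, giving (c); and for (b) one uses that $\fopd$ has enough projectives (the standard representables indexed by $\nat$), so any $F \in \fopd^\mu$ is a quotient of some projective $P$ in $\fopd$, and $(-)^\mu$ produces $P^\mu \twoheadrightarrow F^\mu = F$ with $P^\mu$ projective in $\fopd^\mu$.

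The only point needing genuine care is the exactness of $\delta$, which is decisive both for closure under quotients in part (1) and for the verification that $F^\mu \in \fopd^\mu$. Everything else reduces to the naturality square for $\mut$ and standard facts about adjunctions.
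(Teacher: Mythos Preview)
Your proposal is correct and follows essentially the same route as the paper: both arguments rest on the exactness of $\delta$ together with the naturality square $f\circ\mut_F = \mut_G\circ\delta f$, deduce closure and the fact that $F^\mu\in\fopd^\mu$ from these, and then obtain the adjunction and its formal corollaries in the standard way. Your verification that $F^\mu\in\fopd^\mu$ (applying naturality directly to the projection $\pi$) is in fact slightly more streamlined than the paper's version, which draws a larger diagram including $\ker\mut_F$, but the content is identical.
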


Let us return to the motivating question, working over $\kring = \rat$ and taking $\opd = \lie$ and $\mu \in \lie(2)$ the generator, so that $\flie^\mu$ is defined. The main result is the following:

\begin{THM}
\label{THM:foutanQ}
[Theorem \ref{thm:outer_mu}]
The category  $\foutanQ \subset \f_\omega (\gr\op; \rat)$ of analytic outer functors is equivalent to the full subcategory $\flie^\mu \subset \flie$.  
\end{THM}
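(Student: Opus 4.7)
The plan is to leverage the equivalence $\Theta \colon \f_\omega(\gr\op; \kring) \xrightarrow{\sim} \flie$ established in \cite{P_analytic}, and show that it restricts to the full subcategories of interest. The strategy is to realize both the outer condition and the condition $\mut = 0$ as the vanishing of a single natural transformation from the shift functor to the identity, and to identify these two transformations under $\Theta$.

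On the $\gr\op$-side, write $\delta_{\gr}$ for the shift endofunctor $F \mapsto F(- * \fg_1)$ induced by free product with $\fg_1$; compatibility of $\Theta$ with the symmetric monoidal structures on both sides provides a natural isomorphism $\Theta \circ \delta_{\gr} \cong \delta \circ \Theta$. I define a natural transformation $\alpha_F \colon \delta_{\gr} F \to F$ as follows: let $j_n \colon \fg_n \hookrightarrow \fg_{n+1}$ be the canonical inclusion and let $\iota_n \colon \fg_n \to \fg_{n+1}$ be the homomorphism sending $x_i \mapsto x_{n+1} x_i x_{n+1}^{-1}$, and set
\[
\alpha_F(n) \;:=\; F(\iota_n) - F(j_n) \colon F(\fg_{n+1}) \to F(\fg_n).
\]
Since $\iota_n = \mathrm{conj}_{x_{n+1}} \circ j_n$, if $F$ is outer then $\alpha_F = 0$. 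Conversely, for any generator $x_i$ of $\fg_n$, the retraction $\pi_i \colon \fg_{n+1} \to \fg_n$ with $x_{n+1} \mapsto x_i$ and $x_j \mapsto x_j$ for $j \leq n$ satisfies $\pi_i \circ j_n = \id_{\fg_n}$ and $\pi_i \circ \iota_n = \mathrm{conj}_{x_i}$, so contravariance yields $F(\mathrm{conj}_{x_i}) - \id = \alpha_F(n) \circ F(\pi_i)$; thus $\alpha_F = 0$ forces all inner automorphisms to act trivially, since $\mathrm{Inn}(\fg_n)$ is generated by the conjugations by the generators.

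The technical heart is to identify $\Theta(\alpha_F) = \mut_{\Theta(F)}$, up to a non-zero scalar. This is an infinitesimal computation: the equivalence $\Theta$ is constructed via the Malcev/Magnus expansion, under which $x_i$ corresponds to $\exp(e_i)$ in the completed enveloping algebra of the free Lie algebra and conjugation by $x_{n+1}$ translates to $\exp(\mathrm{ad}\, e_{n+1})$. The linear term $\mathrm{ad}\, e_{n+1} = [e_{n+1}, -]$ is precisely the operation of applying $\mu \in \lie(2)$ to the fresh input together with an existing one, that is, the operadic description of $\mut$. The higher-order terms of $\exp(\mathrm{ad}\, e_{n+1})$ strictly increase polynomial degree, so on each polynomial layer (equivalently, on cross-effects), the difference $F(\iota_n) - F(j_n)$ transports under $\Theta$ to $\mut_{\Theta(F)}$.

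Combining the two claims, $F \in \ob \f_\omega(\gr\op; \kring)$ is outer if and only if $\mut_{\Theta(F)} = 0$, i.e., $\Theta(F) \in \ob \flie^\mu$; since $\Theta$ is an equivalence, this yields the asserted equivalence $\foutan \simeq \flie^\mu$. The main obstacle will be making the identification $\Theta(\alpha_F) = \mut_{\Theta(F)}$ rigorous, which requires unwinding the explicit construction of $\Theta$ from \cite{P_analytic} and carrying out the Magnus-expansion computation with care for signs and prefactors.
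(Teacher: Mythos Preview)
Your proposal contains a genuine gap, and it is precisely the point the paper flags as ``the subtlety'' at the start of Section~\ref{sect:proof_lie_outer}.  The assertion that ``compatibility of $\Theta$ with the symmetric monoidal structures on both sides provides a natural isomorphism $\Theta \circ \delta_{\gr} \cong \delta \circ \Theta$'' is false.  Symmetric monoidal compatibility in \cite{P_analytic} relates the tensor product on $\f_\omega(\gr\op;\kring)$ to the convolution $\conv$ on $\flie$; it says nothing about precomposition with $-\star\fg_1$ versus $-\tprop 1$.  In fact, writing $\gamma$ for the equivalence, the paper's Proposition~\ref{prop:gamma_tbar} shows
\[
\gamma(\tbar F)\;\cong\;\bigoplus_{k\geq 1}\delta^{(k)}(\gamma F),
\]
so the shift on the $\gr\op$ side corresponds to an \emph{infinite} sum of symmetrized iterated shifts on the $\flie$ side, with $\delta$ only the $k=1$ summand.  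Consequently your equation $\Theta(\alpha_F)=\mut_{\Theta(F)}$ cannot hold as written: the two maps do not even have the same source.

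What survives from your heuristic is exactly one direction.  Your Magnus-expansion observation that the linear term of conjugation is $\mathrm{ad}$ corresponds to the paper's Proposition~\ref{prop:restrict_rho_delta}: the restriction of $\gamma(\rbar_F)$ to the summand $\delta(\gamma F)\subset\gamma(\tbar F)$ is $\mut_{\gamma F}$.  Hence $\rbar_F=0$ implies $\mut_{\gamma F}=0$.  The converse, however, requires controlling all the higher summands $\delta^{(k)}$, and your remark that higher powers of $\mathrm{ad}\,e_{n+1}$ raise degree does not show they vanish when the linear term does.  The paper supplies the missing argument: a filtration $\tbar^{[\leq K]}$ of $\tbar$ and an induction (Lemmas~\ref{lem:filt_tbar} and~\ref{lem:psi_rho_composites}) identifying the composite $\tbar^{[\leq K]}F\to F$ with an iterate $\rbar^K$, so that vanishing on $\tbar^{[\leq 1]}$ propagates to all $K$.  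This inductive step is the substantive ingredient absent from your proposal.
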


This result makes the study of analytic outer functors on $\gr\op$ more accessible. In particular, via the study of $\flie^\mu$, new information can be obtained upon the structure of the projective generators of $\foutanQ$, as explained in Section \ref{subsect:proj_flie_mu}. 
 This is carried out by using the associated Schur functors (i.e., working with functors on the category $\fvs$ of finite-dimensional $\rat$-vector spaces). There is a left $\cat \lie$-module given by 
\[
\underline{\lie (V)} \ : \  n \mapsto \lie (V)^{\otimes n}
\]
that is natural with respect to the vector space $V$. Here, $\lie(V)$ is the free Lie algebra on $V$ and the action of $\cat\lie$ on $\underline{\lie(V)}$ is induced by the Lie algebra structure of $\lie (V)$.
Now,  using tensor products of  the right adjoint representation of $\lie (V)$, for each $n$,  $\lie (V)^{\otimes n}$ is a right $\lie(V)$-module, naturally with respect to $V$. Passing to zeroth Lie algebra homology (i.e., coinvariants), one obtains a left $\cat \lie$-module 
\[
H_0 (\lie (V); \underline{\lie(V)} ) \ : \  n \mapsto H_0 (\lie(V); \lie(V)^{\otimes n})
\]
that is natural with respect to $V$.

\begin{THM}
\label{THM:proj_cover_mu}
[Theorem \ref{thm:catliemu}]
For $m \in \nat$, the Schur functor associated to the projective cover of $\rat \sym_m$ in $\flie^\mu$ is the homogeneous polynomial component of 
 $
V \mapsto H_0 (\lie(V); \underline{\lie (V)} )$ of degree $m$.
\end{THM}

This is exploited  in \cite{P_wall}; the strategy developed there also suggests that the categories $\fopd^\mu$ for other suitable operads $\opd$ are of interest. In particular, there is a universal example, given by taking $\opd$ to be the (right) Leibniz operad $\leib$. This is related to the Lie operad by the morphism $\leib \rightarrow \lie$ that encodes forgetting antisymmetry; restricting along this gives the diagram of embeddings 
\[
\xymatrix{
\flie^\mu
\ar[r]
\ar[d]
&
\fleib^\mu 
\ar[d]
\\
\flie 
\ar[r]
&
\fleib.
}
\]
This  {\em determines} $\flie^\mu$ in terms of $\fleib^\mu$ (see Example \ref{exam:lie_leib_mu}).

%%%%%%%%%%%%%%%%%%%%%%%%%%%%%%%%%%%%%%%%%%%%%%%%%%%%%%
\subsection{Notation}

The following notation is used throughout:
 \begin{itemize}[label=]
 \item 
 $\nat$ the set of non-negative integers;
 \item 
 $\kring$ a field (taken to be $\rat$ in Part \ref{part2});
 \item 
 $\fb$  the category of finite sets and bijections;
 \item 
  $\gr$ the category of finitely-generated free groups;
 \item 
 for $n\in \nat$, $\mathbf{n}:= \{1, \ldots , n \}$ so that $\{ \mathbf{n}\  | \ n \in \nat  \}$ is a  skeleton of $\fb$;
 \item 
 $\sym_n$ the symmetric group on $\mathbf{n}$.
 \end{itemize}

\subsection{Acknowledgements} The author is  grateful to Christine Vespa for her interest and for numerous comments and suggestions on this and related work.

He also thanks the anonymous referee for their careful reading of this work and for their comments and suggestions for improving the presentation. 

\bigskip
This work was partially supported by the ANR Project {\em ChroK}, {\tt ANR-16-CE40-0003}.
\tableofcontents

\part{The general case}

This first part of the paper sets up the general framework, working over a suitable operad, with $\kring$ a field of arbitrary characteristic.

 \section{The functor $\delta$}
\label{sect:delta} 
 
This section serves to introduce the category $\fopd$ of representations of the $\kring$-linear category $\cat \opd$ 
 associated to an operad $\opd$, together with the `shift' functor $\delta : \fopd \rightarrow \fopd$. (The reader is referred to \cite{P_analytic} for further details.)
 
%%%%%%%%%%%%%%%%%%%%%%%%%%%%%%%%%%%%%%%%%%%%%%%%%%%%%%%%%%%%%%%%%%%%%%%%%%%% 
\subsection{Background}

 Let $\opd$ be an operad in $\kring$-vector spaces and $\cat \opd$ be the associated PROP (see \cite[Section 5.4.1]{LV}, for example). Explicitly, $\cat \opd$ has set of  objects $\nat$ and 
\begin{eqnarray}
\label{eqn:cat_opd}
\cat \opd (m, n) 
= 
\bigoplus 
_{f \in \hom_{\mathsf{Set}}(\mathbf{m}, \mathbf{n})} 
\bigotimes _{i=1}^n 
\opd (|f^{-1}(i)|).
\end{eqnarray}

\begin{nota}
Denote by $\fopd$ the category of $\kring$-linear functors from $\cat \opd$ to $\kring$-vector spaces.
\end{nota}

\begin{rem}
\ 
\begin{enumerate}
\item 
If the operad $\opd$ is reduced (i.e., $\opd (0)=0$), then $\cat \opd (m,n)=0$ for $m<n \in \nat$. 
\item 
The category $\fopd$ is equivalent to the category of left $\cat\opd$-modules (see \cite{P_analytic}).
\item 
We will also use the category of {\em contravariant} $\kring$-linear functors from $\cat \opd$ to $\kring$-vector spaces; this is equivalent to the category of right $\cat\opd$-modules, denoted $\rmodO$.
\end{enumerate}
\end{rem}

The construction of $\cat \opd$ is natural with respect to the operad:  a morphism of operads $ \phi : \opd \rightarrow \ppd$ induces a morphism of PROPs $\cat \phi : \cat \opd \rightarrow \cat \ppd$. The $\kring$-linear functor between the underlying categories is the identity on objects.

\begin{nota}
For $\phi : \opd \rightarrow \ppd$ a morphism of operads, write $\phi^* : \fppd \rightarrow \fopd$ for restriction along  $\cat \phi : \cat \opd \rightarrow \cat \ppd$.  Explicitly, for $G \in \ob \fppd$, the functor $\phi^* G$ is given by $(\phi^* G)(n) = G(n)$, with morphisms acting accordingly.
\end{nota} 
 
\begin{lem}
\label{lem:conservative}
For $\phi : \opd \rightarrow \ppd$ a morphism of operads, the functor $\phi^* : \fppd \rightarrow \fopd$ is exact.  
 Moreover,  a morphism $f: G_1 \rightarrow G_2$ of $\fppd$ is:
\begin{enumerate}
\item 
an isomorphism if and only if $\phi^* f$ is an isomorphism; 
\item 
zero if and only if $\phi^* f$ is zero.
\end{enumerate}
\end{lem}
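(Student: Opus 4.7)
The plan is to exploit the fact that $\cat \phi : \cat \opd \to \cat \ppd$ is the identity on objects (both categories have object set $\nat$), so that restriction along $\cat \phi$ does not alter the underlying $\kring$-vector space assigned to each $n \in \nat$: by the explicit description in the notation, $(\phi^* G)(n) = G(n)$ for every $n$. Consequently, every property of functors or morphisms in $\fppd$ which can be checked pointwise (i.e.\ arity by arity, in $\kmod$) is preserved and reflected by $\phi^*$.

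First I would observe that the abelian category structure on $\fopd$ (respectively $\fppd$) is computed pointwise: a sequence $0 \to F_1 \to F_2 \to F_3 \to 0$ is exact in $\fopd$ if and only if $0 \to F_1(n) \to F_2(n) \to F_3(n) \to 0$ is exact in $\kmod$ for every $n \in \nat$ (standard for categories of $\kring$-linear functors into $\kmod$). Applying this to a short exact sequence in $\fppd$ and using that $\phi^*$ is the identity on underlying values immediately gives exactness of $\phi^*$.

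For the remaining assertions, a morphism $f : G_1 \to G_2$ in $\fppd$ consists of a family of $\kring$-linear maps $f(n) : G_1(n) \to G_2(n)$, and $\phi^* f$ is the same family viewed as a morphism in $\fopd$. Being an isomorphism, or being zero, is again a pointwise condition: $f$ is an isomorphism (resp.\ zero) iff every $f(n)$ is a bijection (resp.\ zero) in $\kmod$. Since $\phi^*f$ has exactly the same components, both equivalences follow at once.

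I do not foresee any real obstacle: the whole content of the lemma is that $\phi^*$ is a restriction functor along a functor which is the identity on objects, together with the observation that abelian structure, isomorphisms, and vanishing of morphisms in functor categories of this shape are detected pointwise. The only small care needed is to mention explicitly that morphisms in $\fppd$ (resp.\ $\fopd$) are natural transformations of $\kring$-linear functors and hence determined by their components at each $n$, so that ``pointwise zero'' and ``zero as natural transformation'' coincide.
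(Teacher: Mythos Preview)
Your argument is correct and is exactly the standard justification; the paper itself omits the proof entirely, treating the lemma as immediate from the fact that $\phi^*$ is restriction along a functor that is the identity on objects, so that exactness, being an isomorphism, and vanishing are all detected pointwise in $\kmod$.
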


Yoneda's lemma gives the following: 
 
\begin{lem}
\label{lem:Yoneda}
For $m \in \nat$, $\cat \opd (m, -)$ corepresents evaluation on $m$: i.e.,  for $F \in \ob \fopd$, there is a natural isomorphism
$
\hom_{\fopd} (\cat \opd (m, -), F) \cong F(m).
$ 

Hence $\cat \opd (m, -)$ is projective in $\fopd$ and $\{ \cat \opd (m, -) \ | \ m \in \nat \}$ is a set of projective generators of $\fopd$.
\end{lem}
 
Recall that the unit operad $I$ is given by $I (n) =0$ for $n \neq 1$ and $I(1)= \kring$. 
The category $\smodug$ of left $\kring\fb$-modules is the category of $\kring$-linear functors from the $\kring$-linearization $\kring \fb$ of the  category  $\fb$ of finite sets and bijections  to $\kring$-vector spaces. 

\begin{rem}
In the literature, $\smodug$ is frequently known as the category of $\fb$-modules; it is equivalent to the category of functors from $\fb$ to $\kring$-vector spaces. Here modules are always understood  in the $\kring$-linear setting, to avoid conflicting usage.
\end{rem}
 
\begin{lem}
\label{lem:restrict_fb}
Restriction along the $\kring$-linear functor $\cat I \rightarrow \cat \opd$ induced by the unit $I \rightarrow \opd$ induces an exact restriction functor
 $
(-)\downarrow : 
\fopd \rightarrow \smodug.
$ 
In particular, for $F \in \ob \fopd$ and $n \in \nat$, $F(n)$ has a canonical underlying $\sym_n$-module structure.
\end{lem}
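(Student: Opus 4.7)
The plan is to reduce the statement to Lemma~\ref{lem:conservative} applied to the unit morphism of operads $\phi : I \rightarrow \opd$, together with an explicit identification of the target category of the induced restriction.

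First I would observe that the operadic unit of $\opd$ provides a morphism of operads $\phi : I \rightarrow \opd$, so by naturality of the PROP construction we obtain a $\kring$-linear functor $\cat \phi : \cat I \rightarrow \cat \opd$ and the corresponding restriction functor $\phi^* : \fopd \rightarrow \f_I$. Exactness of $\phi^*$ is then immediate from Lemma~\ref{lem:conservative}, so the only task left is to identify $\f_I$ with $\smodug$.

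The key remaining step is to unpack formula~(\ref{eqn:cat_opd}) applied to $\opd = I$. Since $I(k) = 0$ for $k \neq 1$ and $I(1) = \kring$, the only set maps $f : \mathbf{m} \rightarrow \mathbf{n}$ that contribute a nonzero summand to $\cat I(m, n)$ are the bijections (which force $m = n$). It follows that $\cat I(m, n) = 0$ for $m \neq n$ and $\cat I(n, n) \cong \kring[\sym_n]$, with composition given by composition of bijections. This exhibits $\cat I$ as the free $\kring$-linear category on a skeleton of $\fb$, so $\kring$-linear functors $\cat I \rightarrow \kmod$ correspond precisely to functors $\fb \rightarrow \kmod$, i.e., to objects of $\smodug$. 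The functor $(-)\downarrow$ is then defined to be $\phi^*$ composed with this equivalence, and inherits exactness.

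The ``in particular'' statement is immediate, since an $\fb$-module evaluated on $\mathbf{n}$ carries a canonical action of $\mathrm{Aut}_{\fb}(\mathbf{n}) = \sym_n$. There is no substantive obstacle: the content is essentially just the identification of $\cat I$ with the $\kring$-linearisation of $\fb$ via formula~(\ref{eqn:cat_opd}), with exactness inherited from the general Lemma~\ref{lem:conservative}.
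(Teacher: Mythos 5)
Your proposal is correct and follows essentially the same route as the paper: the paper's proof simply asserts that $\cat I$ is equivalent to the $\kring$-linearization $\kring\fb$ of $\fb$ and that $\kring$-linear functors on $\kring\fb$ are $\fb$-modules, which is exactly what you verify by unpacking formula~(\ref{eqn:cat_opd}) for $\opd = I$. Your additional explicit appeal to Lemma~\ref{lem:conservative} for exactness is a harmless elaboration of a point the paper leaves implicit.
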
 
 
 \begin{proof}
 The category $\cat I$ is equivalent to the $\kring$-linearization $\kring \fb$ of the category $\fb$. 
 \end{proof}
 
This restriction functor admits a section under suitable hypotheses:

\begin{prop}
\label{prop:alpha}
Suppose that the operad $\opd$ is reduced and that $\opd (1)= \kring$, generated by the unit. Then the restriction functor 
$ (-)\downarrow : 
\fopd \rightarrow \smodug
$ admits a section 
$
\alpha : 
\smodug 
\rightarrow 
\fopd
$ 
that sends a left $\kring\fb$-module $M$ to the functor $n \mapsto M(n)$ on which the morphisms of $\cat \opd (s,t)$ with $s \neq t$ act by zero.
\end{prop}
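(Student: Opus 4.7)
The plan is to construct $\alpha$ as restriction along a canonical augmentation of operads. Under the hypotheses $\opd(0) = 0$ and $\opd(1) = \kring$ generated by the unit, I would first exhibit a morphism of operads $\epsilon : \opd \to I$ defined by the identity on arity one and zero in all other arities. Verifying that this is compatible with operadic composition is straightforward: any composition whose target has arity $\neq 1$ is mapped to $I(k) = 0$, while compositions of purely arity-$1$ operations reduce to the multiplication on $\opd(1) = \kring$, which agrees with that on $I(1) = \kring$ since $\opd(1)$ is generated by the unit. I would then set $\alpha := \epsilon^* : \smodug \simeq \f_I \to \fopd$, restriction along the induced PROP morphism $\cat\epsilon : \cat\opd \to \cat I$ (identifying $\smodug$ with $\f_I$ as in Lemma \ref{lem:restrict_fb}).

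To see that this $\alpha$ is a section of $(-)\downarrow$, I would observe that the composite $I \xrightarrow{\eta} \opd \xrightarrow{\epsilon} I$ of operads is the identity (again since $\opd(1) = \kring$ is generated by the unit). Applying the functorial construction $\opd \mapsto \cat\opd$ and restricting yields $(-)\downarrow \circ\, \alpha = \id$. To confirm the explicit formula in the statement, I would unpack \eqref{eqn:cat_opd}: for $m \neq n$, every function $f : \mathbf{m} \to \mathbf{n}$ indexing the direct sum has at least one fibre of cardinality different from $1$ (since $n$ positive integers summing to $m \neq n$ cannot all equal $1$), so the associated tensor factor $\opd(|f^{-1}(i)|)$ is either zero by reducedness (if the fibre is empty) or else has arity $\geq 2$ and is killed by $\epsilon$. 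Hence every morphism of $\cat\opd(m,n)$ with $m \neq n$ acts as zero on $\alpha(M)$, as required.

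Finally, fully-faithfulness is formal. A natural transformation $\sigma : \alpha M \to \alpha N$ in $\fopd$ amounts to $\kring$-linear maps $\sigma_n : M(n) \to N(n)$; naturality with respect to morphisms in $\cat\opd(m,n)$ with $m \neq n$ is automatic since both actions vanish, while naturality with respect to $\cat\opd(n,n) \cong \kring[\sym_n]$ (a consequence of reducedness together with $\opd(1) = \kring$) reduces to $\sym_n$-equivariance of $\sigma_n$, which is precisely the data of a morphism in $\smodug$. I do not anticipate a serious obstacle: the only substantive verification is the existence of the augmentation $\epsilon$, which is essentially immediate from the hypotheses; the rest is bookkeeping with \eqref{eqn:cat_opd} and the formal properties of restriction.
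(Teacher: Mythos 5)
Your proposal is correct and follows essentially the same route as the paper: the augmentation $\opd \to I$ determined by the hypotheses, with $\alpha$ defined as restriction along it and the section property read off from $\epsilon \circ \eta = \id_I$. The additional verifications you supply (vanishing of the off-diagonal action via equation \eqref{eqn:cat_opd} and fully-faithfulness) are accurate elaborations of what the paper leaves as "by construction".
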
 
 
\begin{proof}
The hypotheses imply that $\opd$ is augmented, with $\opd \rightarrow I$ the unique morphism of operads that is the identity in arity one. The functor $\alpha$ is induced by restriction along the augmentation. By construction, it is a section to the restriction functor.
\end{proof}

%%%%%%%%%%%%%%%%%%%%%%%%%%%%%%%%%%%%%%%%%%%%%%%%%%%%%%%%%%%%%%%%%%%%%%%%%%%%%%%%%% 
\subsection{Introducing $\delta$} 

The construction of $\delta$ below exploits the fact that $\cat \opd$ is a PROP, hence has a canonical symmetric monoidal structure (denoted here by $(\cat \opd, \tprop, 0)$) that corresponds to addition on the set of objects $\nat$. This gives the following:

 \begin{lem}
 \label{lem:shift}
 The symmetric monoidal structure $(\cat \opd, \tprop, 0)$  induces a faithful functor
 \[
 (\underline{\  }) \tprop 1  : \cat \opd \rightarrow \cat \opd
 \]
 that acts on objects by $n \mapsto n+1$, for $n \in \nat$.
 
This is natural with respect to the operad: for a morphism of operads $ \phi : \opd \rightarrow \ppd$, there is a commutative  diagram:
 \[
 \xymatrix{
 \cat \opd 
 \ar[r]^{\cat \phi}
 \ar[d]_{(\underline{\  }) \tprop 1}
 &
 \cat \ppd
 \ar[d]^{(\underline{\  }) \tprop 1}
 \\
 \cat \opd 
 \ar[r]_{\cat \phi} 
 &
 \cat \ppd. 
 }
 \]
 \end{lem}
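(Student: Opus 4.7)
The plan is to define the functor as $- \tprop 1$, that is, as the endofunctor of $\cat\opd$ obtained by fixing the right argument of the bifunctor $\tprop : \cat \opd \times \cat \opd \to \cat \opd$ at the object $1 \in \ob \cat\opd$. Bifunctoriality of the symmetric monoidal product will then automatically make this a $\kring$-linear functor; on objects it will send $n$ to $n \tprop 1 = n+1$, and on a morphism $f : m \to n$ it will produce $f \tprop \id_1 : m+1 \to n+1$.

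To establish faithfulness, I would use the explicit formula (\ref{eqn:cat_opd}). Write an element of $\cat \opd (m,n)$ as a finite $\kring$-linear combination of tensors indexed by set maps $f : \mathbf{m} \to \mathbf{n}$; the operation $- \tprop \id_1$ should send such a summand to the tensor indexed by $f \sqcup \id_{\{m+1\}} : \mathbf{m+1} \to \mathbf{n+1}$, with the label $\id \in \opd(1)$ inserted at the $(n+1)$-th tensor factor. Running over all $f$ realises $\cat \opd (m,n)$ as a $\kring$-linear direct summand of $\cat \opd (m+1, n+1)$ (namely the summand corresponding to those set maps $\mathbf{m+1} \to \mathbf{n+1}$ which send $m+1$ to $n+1$ and have operadic label $\id$ at that factor), so the induced map on morphism spaces is injective.

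For naturality in $\opd$, I would invoke the fact that the PROP construction $\opd \mapsto \cat \opd$ is functorial with values in (strict) symmetric monoidal $\kring$-linear categories: the monoidal product of morphisms, as visible from (\ref{eqn:cat_opd}), is described by concatenating set maps and tensoring operadic labels, an operation manifestly preserved by any operad morphism. Hence $\cat \phi$ is strict symmetric monoidal, whence $\cat \phi (f \tprop \id_1) = \cat \phi (f) \tprop \cat\phi(\id_1) = \cat \phi (f) \tprop \id_1$, yielding the commutative square on morphisms (and trivially on objects, where both composites send $n$ to $n+1$).

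The main obstacle is little more than a bookkeeping exercise: one must verify that $\cat \phi$ preserves the monoidal structure $\tprop$. However, this is immediate from the explicit description of morphisms in $\cat \opd$ coming from (\ref{eqn:cat_opd}), so the overall proof should be a matter of unpacking the symmetric monoidal structure of the PROP and applying bifunctoriality.
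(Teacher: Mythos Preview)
Your proposal is correct. The paper states this lemma without proof, treating it as an immediate consequence of the symmetric monoidal (PROP) structure of $\cat\opd$; your argument simply unpacks this, using bifunctoriality of $\tprop$ to define the functor, the explicit description (\ref{eqn:cat_opd}) to verify faithfulness, and the strict symmetric monoidality of $\cat\phi$ for the naturality square. This is exactly the kind of routine verification the paper is implicitly invoking.
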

 
 \begin{defn}
 \label{defn:delta}
 Let $\delta^\opd : \fopd \rightarrow \fopd$ be the functor  given by precomposition with $  (\underline{\  }) \tprop 1  : \cat \opd \rightarrow \cat \opd$. (When the operad $\opd$  is clear from the context, this will be denoted simply by $\delta$.)
 \end{defn}
 
 The following is immediate, recalling that $\mathbf{n}$ denotes the set  $ \{ 1, \ldots , n \}$ and $\sym_n$ is its automorphism group:
 
 \begin{prop}
 \label{prop:delta_exact}
 The functor $\delta : \fopd \rightarrow \fopd$ is exact and preserves coproducts in $\fopd$.  
 Explicitly, for $F \in \ob \fopd$ and $n \in \nat$, $\delta F(n) = F(n+1)$ as $\kring$-vector spaces,  
with underlying $\sym_n$-module
 \[
 \delta F (n) = F(n+1) \downarrow _{\sym_n}^{\sym_{n+1}},
 \]
 where the right hand side indicates the  restriction along $\sym_n \subset \sym_{n+1}$ induced by the inclusion $\mathbf{n} \subset \mathbf{n+1}$.
 \end{prop}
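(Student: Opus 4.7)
The plan is to reduce everything to general facts about precomposition functors between categories of $\kring$-linear functors, together with a careful bookkeeping of what the shift $(\underline{\ })\tprop 1$ does on the subcategory $\cat I \subset \cat \opd$.

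First, observe that $\fopd$ is a category of $\kring$-linear functors to $\kmod$, so all kernels, cokernels and coproducts are computed pointwise at each $n \in \nat$. Consequently, for any $\kring$-linear endofunctor $\Phi : \cat\opd \rightarrow \cat\opd$, precomposition with $\Phi$ is an exact functor that preserves coproducts. Applying this to $\Phi = (\underline{\ })\tprop 1$ gives the first assertion. Moreover, since the symmetric monoidal structure $\tprop$ corresponds to addition on the object set $\nat$, one has $n \tprop 1 = n+1$, so that by Definition \ref{defn:delta},
\[
\delta F (n) = F(n \tprop 1) = F(n+1)
\]
as $\kring$-vector spaces, which is the second assertion.

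It remains to identify the underlying $\sym_n$-module structure of $\delta F (n)$. By Lemma \ref{lem:restrict_fb}, this structure is obtained by restriction along $\cat I \rightarrow \cat \opd$, so one needs to understand how $(\underline{\ })\tprop 1$ acts on $\cat I (n,n) \cong \kring [\sym_n]$. By the naturality statement of Lemma \ref{lem:shift} applied to the unit $I \rightarrow \opd$, it suffices to carry out this computation in $\cat I$. Unwinding the symmetric monoidal PROP structure encoded by (\ref{eqn:cat_opd}), the shift sends a permutation $\sigma \in \sym_n$ to $\sigma \tprop \id_1$, which is the standard inclusion $\sym_n \hookrightarrow \sym_{n+1}$ fixing $n+1$, i.e., the one induced by $\mathbf{n} \subset \mathbf{n+1}$. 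Therefore the $\sym_n$-action on $\delta F (n) = F (n+1)$ is precisely the restriction of the $\sym_{n+1}$-action on $F(n+1)$, as stated.

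No step here is genuinely difficult; the only point requiring care is the verification that the inclusion $\sym_n \hookrightarrow \sym_{n+1}$ produced by $(\underline{\ })\tprop 1$ agrees with the one coming from $\mathbf{n} \subset \mathbf{n+1}$, and this is a matter of tracing through the explicit PROP structure on $\cat \opd$. Everything else is a formal consequence of exactness being pointwise in categories of $\kring$-linear functors and of the definition of the monoidal product $\tprop$ on objects.
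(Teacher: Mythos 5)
Your proof is correct and takes the same route the paper intends: the paper states this proposition as "immediate," and your argument simply spells out the implicit reasoning — exactness and coproduct preservation because precomposition is computed pointwise, and the identification of the $\sym_n$-action by tracing $\sigma \mapsto \sigma \tprop \id_1$ through the PROP structure.
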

 
By Lemma \ref{lem:shift},  $\delta^\opd$ is natural with respect to the operad:

\begin{prop}
\label{prop:delta_nat_opd}
For $\phi : \opd \rightarrow \ppd$ a morphism of operads, the functors $\delta^\opd$ and $\delta^\ppd$ are compatible via $\phi^*$. Namely, for $G \in \ob \ppd$, there is a natural isomorphism
\[
\phi^* (\delta^\ppd G) \cong \delta^\opd (\phi^* G).
\] 
\end{prop}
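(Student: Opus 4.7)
The plan is essentially to unpack the definitions of $\phi^*$, $\delta^\opd$, and $\delta^\ppd$ as precompositions and then invoke the naturality square from Lemma \ref{lem:shift}. By definition, for $G \in \ob \fppd$, the restriction $\phi^* G$ is the $\kring$-linear functor $G \circ \cat \phi : \cat \opd \rightarrow \kmod$, while $\delta^\ppd G = G \circ ((\underline{\ }) \tprop 1)$, where the shift is on $\cat \ppd$. Therefore
\[
\phi^* (\delta^\ppd G) = G \circ ((\underline{\ }) \tprop 1)_{\cat\ppd} \circ \cat\phi,
\qquad
\delta^\opd (\phi^* G) = G \circ \cat\phi \circ ((\underline{\ }) \tprop 1)_{\cat\opd}.
\]

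The commutative diagram of Lemma \ref{lem:shift} asserts exactly that $((\underline{\ }) \tprop 1)_{\cat\ppd} \circ \cat\phi = \cat\phi \circ ((\underline{\ }) \tprop 1)_{\cat\opd}$ as $\kring$-linear functors $\cat\opd \rightarrow \cat\ppd$. Composing on the left with $G$, both expressions above become the same $\kring$-linear functor $\cat \opd \rightarrow \kmod$, so one obtains an equality $\phi^*(\delta^\ppd G) = \delta^\opd(\phi^* G)$ of objects of $\fopd$ (and not merely an isomorphism). Naturality in $G$ is automatic since the equality is induced by a fixed equality of underlying functors between the indexing PROPs.

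I don't expect a real obstacle here: the statement is a formal consequence of the naturality of the shift functor already recorded in Lemma \ref{lem:shift}. The only small point worth stressing in the write-up is that one should verify the claim at the level of $\kring$-linear categories (i.e., that the diagram in Lemma \ref{lem:shift} is a diagram of $\kring$-linear functors), since the functor categories $\fopd$ and $\fppd$ are defined via $\kring$-linear functors; this is immediate from the construction of $\cat\phi$ and of $(\underline{\ }) \tprop 1$.
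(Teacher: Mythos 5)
Your argument is correct and is exactly the paper's: the paper's proof also just invokes the commutative square of Lemma \ref{lem:shift} relating the two shift functors via $\cat\phi$, and your write-up simply makes the precomposition bookkeeping explicit (including the observation that one actually gets an equality of functors). No issues.
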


%%%%%%%%%%%%%%%%%%%%%%%%%%%%%%%%%%%%%%%%%%%%%%%%%%%%%%%%%%% 
\subsection{The behaviour of $\delta$ on projectives and the convolution product for right $\cat\opd$-modules} 
\label{subsect:proj_right_modules}

 Understanding $\delta$ on the projective generators of the category $\fopd$  is of intrinsic interest. Moreover, this allows the functor $\delta$ to be analysed via Proposition \ref{prop:delta_universal} below, by reducing to the universal example. 
 
\begin{prop}
\label{prop:delta_proj}
For $m \in \nat$, $\delta \cat \opd (m, -)$ is projective. 
 Explicitly: 
\[
\delta \cat \opd (m, -)
= 
\bigoplus _{X \subset \mathbf{m}} \cat \opd (|X|, -) \otimes \opd (m-|X|),
\]
where the sum is taken over subsets $X$ of $\mathbf{m}= \{1, \ldots, m \}$.

If $\opd$ is a reduced operad, then the sum can be taken over proper subsets $X \subsetneq \mathbf{m}$.
\end{prop}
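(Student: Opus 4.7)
The plan is to compute $\delta \cat \opd (m, -)$ pointwise and identify a natural direct sum decomposition according to the preimage of the distinguished element $n+1 \in \mathbf{n+1}$. First, I would unpack the definition via formula (\ref{eqn:cat_opd}):
\[
\delta \cat \opd (m, -)(n) \ = \ \cat \opd (m, n+1) \ = \ \bigoplus_{f : \mathbf{m} \to \mathbf{n+1}} \bigotimes_{i=1}^{n+1} \opd (|f^{-1}(i)|).
\]
Then I would group the sum by the subset $X := f^{-1}(\mathbf{n}) \subset \mathbf{m}$, observing that specifying such an $f$ is equivalent to specifying a subset $X \subset \mathbf{m}$ together with a function $g : X \to \mathbf{n}$ (the restriction of $f$). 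Under this correspondence the tensor product splits as $\bigotimes_{i=1}^n \opd (|g^{-1}(i)|) \otimes \opd (m-|X|)$, since $|f^{-1}(n+1)| = m - |X|$. Fixing order-preserving bijections $X \cong \mathbf{|X|}$ identifies $\bigoplus_{g : X \to \mathbf{n}} \bigotimes_{i=1}^n \opd (|g^{-1}(i)|)$ with $\cat \opd (|X|, n)$, giving the desired decomposition at each $n$.

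The main point to check is that this decomposition is natural in $n$, i.e.\ that it is compatible with the $\cat \opd$-action, so that it is an isomorphism of functors in $\fopd$ rather than merely a pointwise one. The functoriality on $\delta \cat \opd (m, -)$ arises from post-composition with morphisms of the form $\phi \tprop 1 : n+1 \to n'+1$ in $\cat \opd$, and by construction of $\tprop$ such morphisms act by $\phi$ on the first $n$ coordinates and by the identity of $\opd (1)$ on the last coordinate. Consequently the preimage of $n+1$ is preserved, so the decomposition by $X$ is respected and, on the summand indexed by $X$, the action reduces to post-composition with $\phi$ on the $\cat \opd (|X|, -)$ factor while leaving the $\opd (m-|X|)$ factor untouched. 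This is exactly the $\cat \opd$-module structure on $\cat \opd (|X|, -) \otimes \opd (m-|X|)$.

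Projectivity is then immediate: each summand is a direct sum of copies of the representable functor $\cat \opd (|X|, -)$ indexed by a $\kring$-basis of $\opd (m-|X|)$, hence projective by Lemma \ref{lem:Yoneda}, and direct sums of projectives are projective. For the reduced case, the term $X = \mathbf{m}$ contributes the factor $\opd (0) = 0$, so this summand vanishes and the sum may be restricted to proper subsets $X \subsetneq \mathbf{m}$. The only real subtlety I anticipate is the bookkeeping in verifying naturality of the pointwise isomorphism; everything else is a direct unpacking of the definitions of $\cat \opd$, $\delta$, and $\tprop$.
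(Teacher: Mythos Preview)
Your proposal is correct and follows essentially the same approach as the paper's proof, which simply says the decomposition ``can be read off directly from equation (\ref{eqn:cat_opd})'' and then invokes Lemma \ref{lem:Yoneda}; you have spelled out that reading-off in detail, including the verification of naturality that the paper leaves implicit.
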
 
 
 \begin{proof}
 It suffices to verify the explicit formula for $\delta \cat \opd (m, -)$, since the right hand side is projective, by Lemma \ref{lem:Yoneda} together with the fact that a coproduct of projectives is projective. The result can then be read off directly from  equation (\ref{eqn:cat_opd}).
 \end{proof}
 
\begin{rem}
The explicit expression for $\delta \cat \opd (m, -)$ can be written as the $\sym_m$-equivariant isomorphism:
\[
\delta \cat \opd (m, -)
\cong 
\bigoplus _{n \leq m} \big(\cat \opd (n, -) \otimes \opd (m-n)\big)\uparrow_{\sym_n \times \sym_{m-n}}^{\sym_m}.
\]
This can be expressed elegantly by using the convolution product $\conv$ on right $\kring\fb$-modules ($\kring$-linear functors from $\kring\fb\op$ to $\kring$-vector spaces) as explained below.
\end{rem}

Recall that, for $F, G$ right $\kring\fb$-modules, their convolution product $F \conv G$ is given  by
\[
F \conv G (Z) = \bigoplus _{Z = X \amalg Y} F(X) \otimes G(Y).
\]
When working with the skeleton of $\fb\op$, this is equivalent to  $(F \conv G )(m) = \bigoplus _{n \leq m}\big( F(n) \otimes G (m-n)\big)\uparrow_{\sym_n \times \sym_{m-n}}^{\sym_m}$.

\begin{nota}
Write $\rmodO$ for the category of right $\cat \opd$-modules and $\smodopug$ for the category of  right $\kring\fb$-modules, so that $\smodopug$ is equivalent to $\mathtt{Mod}_{\cat I}$.
\end{nota}

\begin{rem}
The category $\rmodO$ is equivalent to the category of right $\opd$-modules with respect to the operadic composition product $\circ$ (see \cite[Proposition 1.2.6]{KM}).
\end{rem}

One has the following counterpart of Lemma \ref{lem:restrict_fb}: 
 restriction along the functor $\kring \fb \cong \cat I \rightarrow \cat\opd$ induced by the unit of the operad induces an exact forgetful functor $\rmodO \rightarrow \smodopug$. 

The convolution product leads to a symmetric monoidal structure $(\rmodO , \rconv, \kring)$ such that the forgetful functor $\rmodO \rightarrow \smodopug$ is symmetric monoidal.  Under the equivalence between $\rmodO$ and right $\opd$-modules, the convolution product $\rconv$ identifies with  that of \cite[Proposition 1.6.3]{KM} for $\opd$-modules, introduced by Fresse  \cite{MR1617616}.

\begin{rem}
The notation $\rconv$ is introduced to avoid potential confusion, since we will also use the analogous convolution product for $\fopd$ (aka. left $\cat\opd$-modules), which will be denoted simply $\odot$ (see Section \ref{sect:mu_properties}).
\end{rem}

Now, $\delta \cat\opd$ has the structure of a $\cat\opd$-bimodule, induced by the canonical bimodule structure of $\cat\opd$. 
 Using the convolution product for right $\cat \opd$-modules, one can form
 \[
 \cat \opd \rconv \opd .
\]
This has the structure of a $\cat\opd$-bimodule: one has a left $\cat\opd$-module with values in right $\cat\opd$-modules given by 
\[
n \mapsto \cat\opd (-, n) \rconv \opd, 
\]
where the left $\cat\opd$-structure is derived from that of $\cat\opd$.

This allows the following reformulation of part of Proposition \ref{prop:delta_proj}:

\begin{prop}
\label{prop:delta_catopd_bimodule}
There is a natural isomorphism of $\cat\opd$-bimodules:
\[
\delta \cat\opd \cong \cat\opd \rconv \opd.
\]
\end{prop}

Now, tensor product over $\cat\opd$ yields a functor 
\[
- \otimes_{\cat\opd} - \ : \ \rmodO \times \fopd \rightarrow \kmod,
\]
using that $\fopd$ is equivalent to the category of left $\cat\opd$-modules (see \cite{P_analytic} for details). 
 Writing $\bimodO$ for the category of $\cat\opd$-bimodules, this induces:
\[
- \otimes_{\cat\opd} - \ : \ \bimodO \times \fopd \rightarrow \fopd.
\]

\begin{exam}
\label{exam:catopd_tensor}
For $F \in \ob \fopd$, $\cat\opd \otimes_{\cat\opd} F$ is naturally isomorphic to $F$. 

This can be fleshed out as follows. For $m \in \nat$, by construction of $\cat\opd \otimes_{\cat\opd} F$, there is a natural map $\cat \opd (m, -) \otimes_{\sym_m} F(m) \rightarrow \cat\opd \otimes_{\cat\opd} F$. Composing this with the above isomorphism gives the map $\cat \opd (m, -) \otimes_{\sym_m} F(m) \rightarrow F$ adjoint to the identity of $F(m)$ by Yoneda's lemma.
\end{exam}

One deduces the following:

\begin{prop}
\label{prop:delta_universal}
For $F \in \ob \fopd$, there is a natural isomorphism
\[
\delta F \cong (\delta \cat\opd) \otimes_{\cat\opd} F.
\]
\end{prop}

\begin{proof}
Clearly $\delta F$ is naturally isomorphic to $\delta ( \cat\opd \otimes_{\cat\opd} F)$, using the natural isomorphism exhibited in Example \ref{exam:catopd_tensor}. To conclude, it suffices to observe that there is a natural isomorphism 
\[
\delta ( \cat\opd \otimes_{\cat\opd} F)
\cong 
(\delta  \cat\opd ) \otimes_{\cat\opd} F.
\]
This follows directly from the definitions.
\end{proof}

\section{The full subcategory $\fopd^\mu$} 
\label{sect:mu} 
 
This section introduces the key players of the paper. The operation $\mut$ is defined and a criterion given for it to be a natural transformation (see Theorem \ref{thm:right_Leibniz}). Under the requisite right Leibniz hypothesis, this is  used to introduce the full category $\fopd^\mu \subset \fopd$.  
 
  %%%%%%%%%%%%%%%%%%%%%%%%%%%%%%%%%%%%%%%%%%%%%%%%%%%%%%%%%%%%%%%%%%%%%%%%%%%%%%%%%%%%%%%%%%%%%%%%
\subsection{The operation $\mut$ and the right Leibniz condition} 
 
The operad $\opd$ and $\mu \in \opd (2)$ are fixed throughout this subsection.

\begin{nota}
\label{nota:mu(n)}
For $n \in \nat$ and $i \in \mathbf{n}$, let $\mu_i (n) \in \cat \opd (n+1, n)$ be the morphism given by $\mu$ with entries $i, n+1$ and output $i$. Namely, with respect to the identification of equation (\ref{eqn:cat_opd}), this is the operation given by the set map $\mathbf{n+1} \rightarrow \mathbf{n}$ sending $j \mapsto j$ for $j <n+1$ and $n+1 \mapsto i$, using the identity in $\opd (1)$ for the fibres of cardinal one and $\mu \in \opd (2)$ for the remaining operation.

Set $\mu(n):= \sum_{i=1}^n \mu_i(n) \in \cat \opd (n+1, n)$ (by convention, $\mu(0)=0$).
 \end{nota}

 \begin{defn}
 \label{defn:mut}
 For $F \in \ob \fopd$ and $n \in \nat$, let  $\mut (n): \delta F (n) \rightarrow F(n)$ be the morphism of $\kring$-vector spaces  induced by $\mu (n) \in \cat \opd (n+1, n)$.
 \end{defn}
 
\begin{rem}
\label{rem:mut_right}
This definition of $\mut(n)$ involves a choice: namely the fixed `variable' $n+1$ acts via $\mu$ {\em on the right}, corresponding to the fact that $- \boxplus 1$ is chosen to define $\delta$.

This choice is compatible with that made in Definition \ref{defn:rho_rbar}, where group conjugation is taken to act on the right. It also corresponds to the fact that, in Section \ref{subsect:proj_flie_mu}, the tensor product adjoint action of a Lie algebra $\g$ on the $n$-fold tensor product $\g^{\otimes n}$, for $n \in \nat$, is taken to be on the right.
\end{rem} 

\begin{rem}
\label{rem:box_notation}
In \cite{P_Hcatlie}, where this material is used in the case $\opd= \lie$, the diagrammatic `box notation' is used to represent the morphisms $\mu(n)$. This notation is borrowed from the study of Jacobi diagrams (see  \cite[Example 3.2]{MR4321214} for example). 

Although this diagrammatic approach is not actually required here, the following example may elucidate the definition:
 $\mu (3)$ is represented by:

\begin{center}
 \begin{tikzpicture}[scale = .2]
 \draw (1,1) -- (1,-3);
 \draw (3,1) -- (3,-3);
 \draw (5,1) -- (5,-3);
\draw [rounded corners] (7,1) -- (7,-1) -- (6,-1);
 \draw [fill= lightgray] (0,-.5) -- (6,-.5) -- (6, -1.5)  -- (0, -1.5) -- cycle;
 \node at (7,-3) {,};
 \end{tikzpicture}
 \end{center}
which is shorthand for
\begin{center}
 \begin{tikzpicture}[scale = .2]
 \draw (1,1) -- (1,-3);
 \draw (3,1) -- (3,-3);
 \draw (5,1) -- (5,-3);
\draw [rounded corners] (7,1) -- (7,-1) -- (5,-1);
\draw [fill=black] (5,-1) circle (0.2);
\node at (10,-1) {$+$};
\draw (13,1) -- (13,-3);
 \draw (15,1) -- (15,-3);
 \draw (17,1) -- (17,-3);
 \draw [fill=white, white] (17,-1) circle (0.2);
\draw [rounded corners] (19,1) -- (19,-1) -- (15,-1);
\draw [fill=black] (15,-1) circle (0.2);
\node at (22,-1) {$+$};
\draw (25,1) -- (25,-3);
 \draw (27,1) -- (27,-3);
 \draw (29,1) -- (29,-3);
 \draw [fill=white, white] (29,-1) circle (0.2);
  \draw [fill=white, white] (27,-1) circle (0.2);
\draw [rounded corners] (31,1) -- (31,-1) -- (25,-1);
\draw [fill=black] (25,-1) circle (0.2);
\node at (31,-3) {,};
 \end{tikzpicture}
 \end{center}
 where $\bullet$ represents $[-,-] \in \lie (2)$. 
\end{rem}

The functor $(-)\downarrow$ used below is the restriction as in Lemma \ref{lem:restrict_fb}:
 
 \begin{lem}
 \label{lem:mu_sym_equivariant}
  For $F \in \ob \fopd$ and $n \in \nat$, the map $\mut (n) : \delta F (n) \rightarrow F(n)$ is $\sym_n$-equivariant.
  In particular, the morphisms $\mut(n)$ define a natural transformation 
\[
\mut_F\downarrow  \  : \  (\delta F)\downarrow \rightarrow F \downarrow
\]
of the underlying left $\kring\fb$-modules from $(\delta (-))\downarrow$ to $(-)\downarrow$.   
 \end{lem}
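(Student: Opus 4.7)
The plan is to reduce the claim to a clean identity inside $\cat\opd$ itself. Since $\mut(n)$ is defined by applying $F$ to the element $\Sigma(n) := \sum_{i=1}^n \mu_i(n) \in \cat\opd(n+1,n)$, and since the $\sym_n$-action on $F(n)$ and on $\delta F(n) = F(n+1)\downarrow_{\sym_n}^{\sym_{n+1}}$ comes by applying $F$ to the appropriate morphisms in $\cat\opd$, functoriality will immediately translate a suitable identity in $\cat\opd$ into the desired equivariance of $\mut(n)$.

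Explicitly, for $\sigma \in \sym_n$, write $\bar\sigma \in \sym_{n+1}$ for its extension fixing $n+1$. I first claim the identity
\[
\sigma \circ \mu_i(n) \;=\; \mu_{\sigma(i)}(n) \circ \bar\sigma
\]
in $\cat\opd(n+1,n)$, for each $i \in \mathbf{n}$. This is checked by unpacking the composition in the PROP via the decomposition (\ref{eqn:cat_opd}): both sides are indexed by the same set map $\mathbf{n+1} \to \mathbf{n}$, namely $l \mapsto \sigma(l)$ for $l \le n$ and $n+1 \mapsto \sigma(i)$; and both sides use $\mu$ on the (unique) fibre of cardinality two, which is $\{i, n+1\}$, and the unit on all other fibres. (On the left, one uses that post-composing by a permutation simply relabels outputs; on the right, one uses that pre-composing by $\bar\sigma$ permutes inputs, sending the fibre $\{\sigma^{-1}(\sigma(i)), n+1\} = \{i, n+1\}$ to the fibre $\{\sigma(i), n+1\}$'s pre-image.)

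Summing this identity over $i \in \mathbf{n}$ and reindexing the right-hand side by $j = \sigma(i)$ yields
\[
\sigma \circ \Sigma(n) \;=\; \Sigma(n) \circ \bar\sigma \quad \text{in } \cat\opd(n+1,n).
\]
Applying the functor $F$ gives $F(\sigma)\circ \mut(n) = \mut(n)\circ F(\bar\sigma)$, which is precisely the $\sym_n$-equivariance of $\mut(n)$ with respect to the action on $F(n)$ and the restricted action on $\delta F(n)$.

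For the second statement, a natural transformation between $\fb$-modules is exactly a sequence of $\sym_n$-equivariant maps (since $\fb$, up to equivalence, has only automorphisms $\sym_n$ on its objects), so the collection $\{\mut(n)\}_{n\in\nat}$ automatically assembles into a natural transformation $(\delta F)\downarrow \to F\downarrow$ once the equivariance is established. The only mildly delicate point is the bookkeeping in the composition identity above, especially making sure that the post-composition by $\sigma$ and pre-composition by $\bar\sigma$ shuffle the fibres in compatible ways; the remainder is formal.
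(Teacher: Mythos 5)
Your proof is correct and follows essentially the same route as the paper's: both rest on the intertwining identity $\sigma\circ\mu_i(n)=\mu_{\sigma(i)}(n)\circ\bar\sigma$, so that conjugation by a permutation merely permutes the summands of $\sum_i\mu_i(n)$. The paper verifies this only for transpositions (which generate $\sym_n$), whereas you check it directly for arbitrary $\sigma$ by unpacking the composition in $\cat\opd$; this is a cosmetic difference.
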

 
 \begin{proof}
Consider $i \neq j \in \{1, \ldots , n \}$ and let $\tau_{ij} \in \sym_n$ denote the associated transposition. Then one has
\[
\tau_{ij} \mu_j(n) \tau_{ij} = \mu_i (n)
\] 
whereas, if $k \not \in \{i, j\}$, $\tau_{ij} \mu_k(n) \tau_{ij} = \mu_k (n)$. This implies the stated equivariance.
 \end{proof}
 
In the following Definition,  $\bullet$ is used to denote composition in $\cat\opd$, to avoid potential confusion with the operadic composition product $\circ$.  We also use that  $\nu \in \opd (k)$ yields $\nu \in \cat \opd (k,1)$ and hence $\nu \tprop \id_1 \in \cat \opd (k+1,2)$. 
 
\begin{defn}
\label{defn:centrality_right_Leibniz}
For the given family $\{ \mu (n) \ | \ n \in \nat \}$, say that 
\begin{enumerate}
\item 
the operad $\opd$ satisfies the right Leibniz condition with respect to $\mu$ if, $ \forall n \in \nat$, $\forall \nu \in \opd (n)$, 
\[
\mu \bullet (\nu \tprop \id_1) = \nu \bullet  \mu (n)  \mbox{\  in $\cat \opd (n+1, 1) = \opd (n+1)$;}
\] 
\item 
the family $\{ \mu (n) \ | \ n \in \nat \}$ is central if, for all $n,s \in\nat$ and $\xi \in \cat\opd (n,s)$:
\[
\mu (s) \bullet (\xi \tprop \id_1)= \xi \bullet \mu (n)  \mbox{\  in $\cat \opd (n+1, s)$.} 
\]
\end{enumerate}
\end{defn}

These conditions are equivalent by the following:

\begin{lem}
\label{lem:centrality_Leibniz}
The family $\{ \mu (n) \ | \ n \in \nat \}$ is central if and only if  $\opd$ satisfies the right Leibniz condition with respect to $\mu$.

Moreover, the right Leibniz condition holds if and only if the equality $\mu \bullet (\nu \tprop \id_1) = \nu \bullet  \mu (n) $ holds for each $\nu \in \opd (n)$ belonging to  a set of generators of the operad. 
\end{lem}
 
\begin{proof}
By taking $s=1$, since $\mu(1) = \mu$,  it is clear that, if the family $\{ \mu (n) \ | \ n \in \nat \}$ is central, then $\opd$ satisfies the right Leibniz condition with respect to $\mu$.

The converse is a consequence of the fact that $\cat \opd$ is generated as a PROP by $\opd$. Hence the morphisms of  $\cat \opd$, considered as a $\kring$-linear category, are generated by the image of $\cat I \rightarrow \cat \opd$ and by elements of the form 
\[
\xi = \nu \tprop \id_{s-1} \in \cat \opd (n,s),
\]
for $\nu \in \opd (n-s+1)$, for the appropriate $n, s \in \nat$. 

Using the equivariance of $\mu (-)$ with respect to the symmetric groups given by Lemma \ref{lem:mu_sym_equivariant} and the aforementioned generating property, it suffices to establish the centrality identity when $\xi = \nu \tprop \id_{s-1} $ as above.   This follows readily from the case $s=1$ by the defining property of $\id_{s-1}$.

Finally, the above argument can be refined to considering a set of generators of the operad.
\end{proof} 
 
\begin{rem}
The box notation referenced in Remark \ref{rem:box_notation} can be useful in visualizing the argument of the proof of Lemma \ref{lem:centrality_Leibniz}. To illustrate this, consider $\nu \in \opd (2)$ a binary operation, represented in the diagrams below by $\circ$; the operation $\mu$ is represented by $\bullet$.

The corresponding right Leibniz condition is represented by
\begin{center}
 \begin{tikzpicture}[scale = .2]
 \draw (-7,1) -- (-7,0) -- (-6,-1) -- (-6,-4);
   \draw (-5,1) -- (-5,0) -- (-6,-1);
\draw [fill=white] (-6,-1) circle (.2);
\draw [rounded corners] (-4,1) -- (-4,-3) -- (-6,-3);
\draw [fill=black] (-6,-3) circle (.2);
\node at (-2,-1) {$=$};
\draw (1,1) -- (1,-2) -- (2,-3) -- (2,-4);
 \draw (3,1) -- (3,-2)-- (2,-3);
 \draw [fill=white] (2,-3) circle (.2);
\draw [rounded corners] (5,1) -- (5,-1) -- (4,-1);
 \draw [fill= lightgray] (0,-0.5) -- (4,-0.5) -- (4, -1.5)  -- (0, -1.5) -- cycle;
 \node at (5,-4) {.};
 \end{tikzpicture}
 \end{center}

Then, for example taking $s=3$ so that $\xi=\nu \boxplus\id_2$, the  centrality condition corresponds to:
\begin{center}
 \begin{tikzpicture}[scale = .2]
 \draw (-11,1) -- (-11,0) -- (-10,-1) -- (-10,-4);
   \draw (-9,1) -- (-9,0) -- (-10,-1);
\draw [fill=white] (-10,-1) circle (.2);
\draw (-8,1) -- (-8,-4); 
\draw (-6,1) -- (-6,-4); 
\draw [rounded corners] (-4,1) -- (-4,-2.5) -- (-5,-2.5);
 \draw [fill= lightgray] (-11,-2) -- (-5,-2) -- (-5, -3)  -- (-11, -3) -- cycle;
\node at (-2,-1) {$=$};
\draw (1,1) -- (1,-2) -- (2,-3) -- (2,-4);
 \draw (3,1) -- (3,-2)-- (2,-3);
 \draw (4,1) -- (4,-4); 
 \draw (6,1) -- (6,-4);
 \draw [fill=white] (2,-3) circle (.2);
\draw [rounded corners] (8,1) -- (8,-1) -- (7,-1);
 \draw [fill= lightgray] (0,-0.5) -- (7,-0.5) -- (7, -1.5)  -- (0, -1.5) -- cycle;
 \node at (8,-4) {.};
 \end{tikzpicture}
 \end{center}
This follows from the right Leibniz condition, since the contributions from the  application of the operation $\mu$ on the strands corresponding to $\id_2$ is the same on both sides.

This diagrammatic approach extends to treat the general case considered in Lemma \ref{lem:centrality_Leibniz}.
\end{rem}

\begin{rem}
\label{rem:leib}
If the hypotheses of Lemma \ref{lem:centrality_Leibniz} are satisfied, then $\mu$ itself must satisfy the right Leibniz condition. Written multiplicatively, this is the familiar 
$
(xy) z = (xz) y + x (yz).
$ 
The pair $(\opd, \mu \in \opd (2))$ is then equivalent to a morphism of operads $\leib \rightarrow \opd$ (where $\leib$ is the operad encoding right Leibniz algebras), together with the  right Leibniz condition for the elements of $\opd$ not in the image.
\end{rem}

\begin{thm}
\label{thm:right_Leibniz}
The morphisms $\mut(n)$ induce a natural transformation $\mut : \delta \rightarrow \mathrm{Id}$ if and only if $\opd$ satisfies the right Leibniz condition with respect to $\mu$.
\end{thm}

\begin{proof}
By definition, $\mut$ is a natural transformation if and only if, for all $\xi$ and for all $F \in \ob \fopd$, the following diagram commutes:
\[
\xymatrix{
\delta F (n) 
\ar@{=}[r]
\ar[d]_{\delta F (\xi) } 
&
F(n+1) 
\ar[r]^{\mut_F (n)} 
\ar[d]|{F (\xi \tprop \id_1) } 
&
F(n) 
\ar[d]^{F (\xi)}
\\
\delta F(s) 
\ar@{=}[r]
&
F(s+1)
\ar[r]
_{\mut_F (s)} 
&
F(s).
}
\] 

If $\opd$ satisfies the right Leibniz condition with respect to $\mu$ then, by Lemma \ref{lem:centrality_Leibniz}, the family $\{ \mu (n) \ | \ n \in \nat\}$ is central. It is clear that centrality implies the commutativity of the right hand square and the left hand square commutes by definition of $\delta$. This establishes the implication $\Leftarrow$. 

For $\Rightarrow$, one establishes centrality (and hence the right Leibniz condition) by using the method of the universal example (i.e., for each $n$, taking $F=  \cat \opd (n+1, -)$ and considering the image of the identity).
\end{proof}

\begin{exam}
The following pairs $(\opd, \mu \in \opd(2))$ satisfy the right Leibniz condition:
\begin{enumerate}
\item 
$\leib$ the operad of right Leibniz algebras with $\mu$ the generating operation; 
\item 
$\lie$, the Lie operad, with $\mu$ the generating operation; this corresponds (by Remark \ref{rem:leib}) to the usual morphism $\leib \rightarrow \lie$ of operads. 
\end{enumerate}
\end{exam}

%%%%%%%%%%%%%%%%%%%%%%%%%%%%%%%%%%%%%%%%%%%%%%%%%%%%%%%%%%%%%%%%%%%%%%%%%%%%%%%%%%%%%%%%%%%%%%%%
\subsection{Introducing $\fopd^\mu$}

Throughout this section, we assume that the pair $(\opd , \mu \in \opd(2))$ satisfies the conditions of Theorem \ref{thm:right_Leibniz} so that, 
for $F \in \ob \fopd$, one has the natural map $
\mut_F : \delta F \rightarrow F 
$
 in $\fopd$.

\begin{defn}
\label{defn:fopd^mu}
Let 
\begin{enumerate}
\item 
$\fopd^\mu \subset \fopd $ be the full subcategory  of functors $F$ for which $\mut_F=0$; 
\item
$(-)^\mu : \fopd \rightarrow \fopd$ be the functor
$
F \mapsto 
F^\mu := 
\mathrm{coker}\ \mut_F;
$
\item 
$\kappa_\mu : \fopd \rightarrow \fopd$ be the functor 
$
F \mapsto 
\kappa_\mu F := 
\ker \mut_F.
$
\end{enumerate}
\end{defn}

The following uses the functor $\alpha$ of Proposition \ref{prop:alpha}.

\begin{prop}
\label{prop:alpha_^mu}
Suppose that $\opd$ is reduced and that $\opd (1)= \kring$. Then the image of $\alpha : \smodug \rightarrow  \fopd$ lies in $\fopd^\mu$. In particular, the simple objects of $\fopd$ belong to $\fopd^\mu$.
\end{prop}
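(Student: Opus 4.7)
The plan is to split the statement into two independent claims: first, that the image of $\alpha$ lies in $\fopd^\mu$; second, that every simple object of $\fopd$ lies in the image of $\alpha$. The second claim combined with the first then yields the ``in particular'' assertion.

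For the first claim, fix $M \in \ob \smodug$ and set $F = \alpha(M)$. By the defining property of $\alpha$ recalled in Proposition \ref{prop:alpha}, every morphism of $\cat \opd (s,t)$ with $s \neq t$ acts by zero on $F$. Each summand $\mu_i(n)$ defining $\mut_F(n)$ lies in $\cat \opd (n+1, n)$ with $n+1 \neq n$, hence acts by zero on $F$. Summing gives $\mut_F (n) = 0$ for all $n$, so $F \in \ob \fopd^\mu$.

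For the second claim, let $S$ be a simple object of $\fopd$ and let $n \in \nat$ be minimal with $S(n) \neq 0$. Under the standing hypotheses, the explicit formula (\ref{eqn:cat_opd}) shows that $\cat \opd (n,m) = 0$ for $m > n$ (reducedness forces the indexing set-maps $\mathbf{n} \rightarrow \mathbf{m}$ to be surjective, which is impossible for $m > n$), and $\cat \opd (n,n) = \kring[\sym_n]$ (the only surjections $\mathbf{n} \twoheadrightarrow \mathbf{n}$ are bijections, and each contributes $\opd(1)^{\otimes n} = \kring$). Given any nonzero $\sym_n$-subrepresentation $V \subset S(n)$, the subfunctor $\langle V \rangle \subset S$ it generates therefore vanishes in arities $> n$, equals $V$ in arity $n$, and vanishes in arities $< n$ by minimality of $n$. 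Simplicity of $S$ forces $\langle V \rangle = S$, whence $V = S(n)$ and $S(m) = 0$ for $m > n$. Since $V$ was arbitrary nonzero, $S(n)$ is a simple $\sym_n$-module, and $S$ is concentrated in the single arity $n$.

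Consequently, every morphism of $\cat \opd$ between distinct arities acts by zero on $S$ (either the source or the target vanishes), so $S = \alpha(S(n))$ when $S(n)$ is regarded as an $\fb$-module supported in arity $n$. Combined with the first claim, this gives $S \in \ob \fopd^\mu$. I do not anticipate a serious obstacle: the only delicate point is the identification of $\cat \opd (n,m)$ for $m \geq n$ under the two hypotheses on $\opd$, which is read off directly from (\ref{eqn:cat_opd}).
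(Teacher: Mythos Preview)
Your proof is correct and follows essentially the same approach as the paper. The paper's argument for the first claim is phrased as a support argument (reduce to $F$ concentrated in a single arity $n$, observe $\delta F$ is then concentrated in arity $n-1$, so $\mut_F$ vanishes for support reasons), whereas you argue more directly that each $\mu_i(n)$ is a morphism between distinct arities and hence acts by zero on $\alpha(M)$; for the second claim the paper simply asserts that the simple objects of $\fopd$ are precisely the images under $\alpha$ of simple $\fb$-modules, while you supply the standard verification of this fact.
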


\begin{proof}
It is clear that the functor $\alpha$ commutes with coproducts; using this, one reduces readily to the case where $F \in \mathrm{image} (\alpha)$ is supported on a single $n \in \nat$ (i.e., $F(j)=0$ if $j \neq n$). Then $\delta F (j) = 0$ if $j \neq n-1$, by the identification given in Proposition \ref{prop:delta_exact}. It follows immediately that the natural transformation $\mut_F$ is zero. 

A functor $F$ of $\fopd$ is simple if and only if it is the image under $\alpha$ of a simple object of $\smodug$. Thus all simple objects belong to $\fopd^\mu$.
\end{proof}

The following result leads to the `universal' construction of objects of $\fopd^\mu$.

\begin{prop}
\label{prop:coker_mu}
For $F \in \ob \fopd$, $F^\mu$ belong to $\fopd^\mu$.  In particular, $(-)^\mu$ defines a functor $(-)^\mu \ : \ \fopd \rightarrow \fopd^\mu$.
\end{prop}

\begin{proof}
Naturality of the construction of $\mut$ gives the following commutative diagram:
\[
\xymatrix{
0
\ar[r]
&
\delta (\kappa_\mu F) 
\ar[r]
\ar[d]_{\mut_{\kappa_\mu F }}
&
\delta \delta F 
\ar[rr]^{\delta (\mut_F)}
\ar[d]|{\mut_{\delta F}}
&&
\delta F 
\ar[r]
\ar[d]|{\mut_F}
&
\delta (F^\mu)
\ar[r]
\ar[d]^{\mut_{F^\mu}}
&
0
\\
0
\ar[r]
&
\kappa_\mu F 
\ar[r]
&
\delta F 
\ar[rr]_{\mut_F}
&&
F 
\ar[r]
&
F^\mu 
\ar[r]
&
0,
}
\]
in which the rows are exact, using the exactness of $\delta$  (given by Proposition \ref{prop:delta_exact}) for the top row. 

The composite $\delta F \rightarrow F^\mu$ in the right hand commutative square is zero, since the maps around the bottom of the square appear in the bottom horizontal sequence. This implies that $F^\mu$ lies in $\fopd^\mu$, since $\delta F \rightarrow \delta (F^\mu) $ is surjective.
\end{proof}

\begin{rem}
\label{rem:ker}
The operation $\mu \in \opd (2)$ induces a natural transformation $\mu' : \delta \delta F \rightarrow \delta F$ via application of $\mu_{n+1}(n+2)$ (this does not require the right Leibniz condition on $\mu$). 
 The composite of the left hand square $\delta \kappa_\mu F \rightarrow \delta F$ in the above proof identifies as the composite:
\[
\delta \kappa_\mu F
\subset 
\delta \delta F 
\stackrel{\mu'}{\rightarrow} 
\delta F
.
\]
This is  non-zero in general. In particular,  $\kappa_\mu F$ does not in general belong to $\fopd^\mu$.
\end{rem}

%%%%%%%%%%%%%%%%%%%%%%%%%%%%%%%%%%%%%%%%%%%%%%%%%%%%%%%%%%%%%%%%%%%%%%%%%%%%%
\subsection{Fundamental properties of $\fopd^\mu$}

Some basic properties of the above constructions are established in this section.

\begin{prop}
\label{prop:closure}
\ 
\begin{enumerate}
\item 
The subcategory $\fopd^\mu$ is closed under the formation of subobjects, quotients and direct sums. 
\item 
If $0 \rightarrow F_1 \rightarrow F_2 \rightarrow F_3 \rightarrow 0$ is a short exact sequence with $F_1, F_3 \in \ob \fopd^\mu$, then $\mut_{F_2} : \delta F_2 \rightarrow F_2$ factors canonically as 
\[
\delta F_2 \twoheadrightarrow \delta F_3 \rightarrow F_1 \hookrightarrow F_2,
\]
where the first map is $\delta (F_2 \twoheadrightarrow F_3)$ and the last is the inclusion. In particular, $F_2$ belongs to $\fopd^\mu$ if and only if the morphism $\delta F_3 \rightarrow F_1$ is zero.
\end{enumerate}
\end{prop}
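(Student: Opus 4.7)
The plan is to exploit the naturality of $\mut$ together with the exactness of $\delta$ established in Proposition \ref{prop:delta_exact}; the whole statement reduces to diagram chasing.

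For part (1), consider a subobject inclusion $\iota : G \hookrightarrow F$ with $F \in \ob \fopd^\mu$. Naturality of $\mut$ gives a commutative square whose top row is $\delta \iota : \delta G \rightarrow \delta F$ (a monomorphism, by exactness of $\delta$) and whose vertical maps are $\mut_G$ and $\mut_F$. Since $\mut_F = 0$ by hypothesis, the composite $\iota \circ \mut_G$ is zero; as $\iota$ is a monomorphism, $\mut_G = 0$, so $G \in \ob \fopd^\mu$. For a quotient $F \twoheadrightarrow Q$ with $F \in \ob \fopd^\mu$, the dual argument applies: $\delta$ preserves epimorphisms, and the commuting square forces $\mut_Q \circ \delta (F \twoheadrightarrow Q) = 0$, whence $\mut_Q = 0$. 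For direct sums, Proposition \ref{prop:delta_exact} also records that $\delta$ preserves coproducts; naturality then identifies $\mut_{\bigoplus F_i}$ with $\bigoplus \mut_{F_i}$, which is zero if and only if each summand is.

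For part (2), apply $\delta$ to the given short exact sequence; exactness of $\delta$ yields the exact top row of the naturality ladder
\[
\xymatrix{
0 \ar[r] & \delta F_1 \ar[r] \ar[d]_{\mut_{F_1}} & \delta F_2 \ar[r] \ar[d]|{\mut_{F_2}} & \delta F_3 \ar[r] \ar[d]^{\mut_{F_3}} & 0 \\
0 \ar[r] & F_1 \ar[r] & F_2 \ar[r] & F_3 \ar[r] & 0.
}
\]
By hypothesis, $\mut_{F_1} = 0$ and $\mut_{F_3} = 0$. The vanishing of $\mut_{F_3}$ means the composite $\delta F_2 \rightarrow F_2 \twoheadrightarrow F_3$ is zero, so $\mut_{F_2}$ factors (uniquely) through the kernel $F_1 \hookrightarrow F_2$. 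Symmetrically, the vanishing of $\mut_{F_1}$ means the composite $\delta F_1 \hookrightarrow \delta F_2 \rightarrow F_2$ is zero, and since $\delta F_2 \twoheadrightarrow \delta F_3$ is the cokernel of $\delta F_1 \hookrightarrow \delta F_2$, the map $\mut_{F_2}$ factors through $\delta F_2 \twoheadrightarrow \delta F_3$. Assembling the two factorizations produces the claimed composite $\delta F_2 \twoheadrightarrow \delta F_3 \rightarrow F_1 \hookrightarrow F_2$. Finally, since this composite begins with an epimorphism and ends with a monomorphism, it is zero if and only if the middle arrow $\delta F_3 \rightarrow F_1$ is zero, giving the criterion for $F_2 \in \ob \fopd^\mu$.

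There is no genuine obstacle; the only thing to be careful about is invoking the correct exactness property of $\delta$ at each step (preservation of monomorphisms for subobjects, epimorphisms for quotients, coproducts for direct sums, and both kernels and cokernels to get the exact top row in the ladder of part (2)).
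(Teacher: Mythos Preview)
Your proof is correct and follows essentially the same approach as the paper: both arguments use the naturality of $\mut$ together with the exactness of $\delta$ to set up the relevant commutative diagrams, with the factorization in part (2) obtained from the same ladder of short exact sequences. Your write-up is in fact more detailed than the paper's, which dismisses part (1) as ``standard'' after treating only the quotient case and merely indicates the diagram chase for part (2).
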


\begin{proof}
The stability statements are standard. For instance, consider a surjection $G \twoheadrightarrow Q$ in $\fopd$. This induces a commutative diagram
\[
\xymatrix{
\delta G \ar[r]^{\mut_G}
\ar@{->>}[d]
&
G
\ar[d]
\\
\delta Q 
\ar[r]_{\mut_Q}
&
Q,
}
\]
where the indicated epimorphism is given by the exactness of $\delta$. Hence, $Q$ lies in $\fopd^\mu$ if and only if the composite in the diagram is zero. In particular, this holds if $G$ belongs to $\fopd^\mu$.

The final statement is proved by using the morphism of short exact sequences provided by the naturality of $\mut$ (corresponding to the vertical maps) and the exactness of $\delta$:
\[
\xymatrix{
0 \ar[r]
&
\delta F_1 
\ar[r]
\ar[d]
&
\delta F_2
 \ar[r]
 \ar[d]
&
\delta
F_3
\ar[d]
\ar[r]&
0
\\
0 \ar[r]&
F_1 
\ar[r]
&
F_2
 \ar[r]
&
F_3
\ar[r]
&
0
.
}
\]
Under the hypotheses, the outer maps are zero, which leads to the required factorization.
\end{proof}

\begin{prop}
\label{prop:mu_left_adjoint}
The functor $(-)^\mu : \fopd \rightarrow \fopd^\mu$ is left adjoint to the inclusion $\fopd^\mu \hookrightarrow \fopd$. 
 Hence
\begin{enumerate}
\item 
$(-)^\mu$ is right exact;
 \item 
$(-)^\mu$ preserves projectives;
\item 
$\fopd^\mu$ has enough projectives. More precisely, $\{ \cat \opd (m, -) ^\mu \ | \  m \in \nat\}$ is a set of projective generators of $\fopd^\mu$.
\end{enumerate}

\end{prop}

\begin{proof}
To prove the adjunction statement, it suffices to show that a morphism $ f:  F\rightarrow G$, where $F \in \ob \fopd$ and $G \in \ob \fopd^\mu$, factors across the canonical surjection $F \twoheadrightarrow F^\mu$. By naturality of $\mut$, one has the commutative diagram of solid arrows:
\[
\xymatrix{
\delta F 
 \ar[r]^{\mut_F}
 \ar[d]_{\delta f}
 &
 F
 \ar@{->>}[r]
 \ar[d]_f
 &
 F^\mu = \mathrm{coker} \ \mut_F
 \ar@{.>}[ld]
 \\
 \delta G 
  \ar[r]_{\mut_G}
  &
  G,
}
\]
where the top row is exact. Since $G$ belongs to $\fopd^\mu$ by hypothesis, $\mut_G=0$; this gives the required factorization (indicated by the dotted arrow).

The remaining statements are then formal consequences of $(-)^\mu$ being left adjoint to an exact functor together with the fact that $\fopd$ has  set of projective generators  $\{ \cat \opd (m, -) \ | \ m \in \nat\}$, by Lemma \ref{lem:Yoneda}.
\end{proof}

The following result complements Proposition \ref{prop:mu_left_adjoint}:

\begin{prop}
\label{prop:kappa}
\ 
\begin{enumerate}
\item 
For $F \in \ob \fopd$,  the canonical inclusion $\kappa_\mu F \hookrightarrow \delta F$ is an isomorphism if and only if $F \in \ob \fopd^\mu$. 
\item 
The functor $\kappa_\mu : \fopd \rightarrow \fopd$ is left exact. 
\item 
For $0 \rightarrow F_1 \rightarrow F_2 \rightarrow F_3 \rightarrow 0$ an exact sequence in $\fopd$, the sequences associated to $\kappa_\mu$ and $(-)^\mu$ splice to give 
 an exact sequence
\[
0 \rightarrow \kappa_\mu F_1 \rightarrow \kappa_\mu  F_2 \rightarrow \kappa_\mu F_3 \rightarrow 
F_1^\mu  \rightarrow F_2^\mu \rightarrow F_3^\mu \rightarrow 0.
\]
\end{enumerate}
\end{prop}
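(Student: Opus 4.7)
The plan is to obtain all three parts uniformly from a single application of the snake lemma, after which everything reduces to unwinding the definitions.

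First I would dispose of part (1), which is essentially tautological: by Definition \ref{defn:kappa_mu} one has $\kappa_\mu F = \ker \mut_F$ realised as a subobject of $\delta F$, so this inclusion is an isomorphism precisely when $\mut_F = 0$, which is the defining condition for $F \in \ob \fopd^\mu$ (Definition \ref{defn:fopd^mu}).

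For parts (2) and (3), I would start with a short exact sequence $0 \to F_1 \to F_2 \to F_3 \to 0$ in $\fopd$. Applying $\delta$ preserves exactness by Proposition \ref{prop:delta_exact}, and naturality of $\mut : \delta \to \mathrm{Id}$ assembles this into the commutative diagram with exact rows
\[
\xymatrix{
0 \ar[r] & \delta F_1 \ar[r] \ar[d]_{\mut_{F_1}} & \delta F_2 \ar[r] \ar[d]_{\mut_{F_2}} & \delta F_3 \ar[r] \ar[d]_{\mut_{F_3}} & 0 \\
0 \ar[r] & F_1 \ar[r] & F_2 \ar[r] & F_3 \ar[r] & 0.
}
\]
The snake lemma applied to this diagram manufactures a connecting morphism $\kappa_\mu F_3 \to F_1^\mu$ and stitches the kernels and cokernels of the vertical maps into the six-term exact sequence
\[
0 \to \kappa_\mu F_1 \to \kappa_\mu F_2 \to \kappa_\mu F_3 \to F_1^\mu \to F_2^\mu \to F_3^\mu \to 0,
\]
since by construction $\ker \mut_{F_i} = \kappa_\mu F_i$ and $\mathrm{coker}\ \mut_{F_i} = F_i^\mu$. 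This proves part (3), and its first three terms give the left-exactness of $\kappa_\mu$ asserted in part (2).

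There is no substantive obstacle here. The essential ingredient is the exactness of $\delta$, without which the top row of the diagram would fail to be short exact and the snake lemma would not apply in its standard form; everything else is formal homological algebra.
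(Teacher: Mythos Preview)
Your proof is correct and matches the paper's own argument: the paper phrases parts (2) and (3) as taking the long exact sequence in homology of the short exact sequence of two-term complexes $\delta F_i \stackrel{\mut}{\to} F_i$, which is exactly the snake lemma applied to your diagram. Part (1) is handled identically, directly from the definitions.
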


\begin{proof}
The first statement follows directly from the definition of $\kappa_\mu$ and of $\fopd^\mu$. 

For the last two statements, consider the short exact sequence of complexes (of length two) given by applying $\mut$ to the given short exact sequence, as in the proof of Proposition \ref{prop:closure}. The six term exact sequence is given by the associated exact sequence in homology. In particular, this shows that $\kappa_\mu$ is left exact.
\end{proof}

%%%%%%%%%%%%%%%%%%%%%%%%%%%%%%%%%%%%%%%%%%%%%%%%%%%%%%%%%%%%%%%%%%%%%%%%%%%%%%%%%%%%%%%%
\subsection{The universal example}

Proposition \ref{prop:delta_universal} shows that, for $F \in \ob \fopd$ (viewed as a left $\cat\opd$-module), there is a natural isomorphism
\[
\delta F \cong (\delta \cat\opd) \otimes_{\cat\opd} F.
\]
This extends to show that $\mut : \delta \cat\opd \rightarrow \cat\opd$ provides the universal example for considering $\mut$:

\begin{prop}
\label{prop:mut_univ_example}
For $F \in \ob \fopd$, the following natural diagram commutes:
\[
\xymatrix{
(\delta \cat \opd )\otimes_{\cat\opd} F
\ar[d]_\cong 
\ar[rr]^{\mut \otimes \id_F} 
&&
\cat\opd \otimes_{\cat\opd} F
\ar[d]^\cong 
\\
\delta F 
\ar[rr]_{\mut} 
&&
F,
}
\]
where the left hand vertical isomorphism is given by Proposition \ref{prop:delta_catopd_bimodule} and the right hand one by Example \ref{exam:catopd_tensor}.
\end{prop}

\begin{proof}
This is essentially tautological when $F = \cat\opd (m, -)$, for some $m \in \nat$, and extends to considering naturality with respect to $m$ (i.e., the right $\cat\opd$-module structure of $\cat\opd$). From this, one deduces the general case.
\end{proof}

\section{Naturality with respect to the operad}
\label{sect:naturality}

In this short section we consider the naturality  of $ \fopd ^\mu$ with respect to suitable $(\opd, \mu)$. Whilst the main application   envisaged of the theory is to the case $\opd = \lie$, it is believed that exploiting the naturality may be a useful tool in studying this case.

Fix $\phi : \opd \rightarrow \ppd$ a morphism of operads so that the element $\mu \in \opd (2)$  yields $\phi \mu \in \ppd (2)$. One can thus construct the natural $\sym_n$-equivariant map:
\[
\widetilde{(\phi \mu)} (n) : \delta^\ppd G(n) \rightarrow G (n)
\]
for $G \in \ob \fppd$ and $n \in \nat$, as in Lemma \ref{lem:mu_sym_equivariant}.

In order to consider the associated natural transformations (cf. Theorem \ref{thm:right_Leibniz}), the following hypothesis is imposed throughout the section:

\begin{hyp}
\label{hyp:right_Leibniz_phi}
Both $(\opd, \mu \in \opd (2))$ and $(\ppd, \phi \mu \in \ppd (2))$ satisfy the  right Leibniz condition. 
\end{hyp}

\begin{prop}
\label{prop:compatibility_mu}
Via the natural isomorphism $\phi^* \delta^\ppd \cong \delta^\opd _{\phi^*} $ of Proposition \ref{prop:delta_nat_opd}, the  natural transformations
$ 
\phi^* \widetilde{(\phi \mu)} $ and $ \mut_{\phi^*}$ correspond.   
 More precisely, for $G \in \ob \fppd$, there is a natural commutative diagram in $\fopd$:
\[
\xymatrix{
\phi^* (\delta^\ppd G) 
\ar[rr]^{\phi^* \widetilde{(\phi \mu)}_G }
\ar[d]_\cong
&&
\phi^* G 
\ar[d]^=
\\
\delta^\opd (\phi^* G) 
\ar[rr]_{\mut_{\phi^* G}}
&&
\phi^* G. 
}
\]
\end{prop}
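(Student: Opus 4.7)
The plan is to reduce the claim to a pointwise check at each $n \in \nat$ and then verify it by tracing through the definitions, exploiting that the morphism $\cat \phi : \cat \opd \to \cat \ppd$ sends the generating operation $\mu_i(n) \in \cat\opd(n+1,n)$ to $(\phi\mu)_i(n) \in \cat\ppd(n+1,n)$.

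More precisely, first I would fix $G \in \ob \fppd$ and $n \in \nat$. Under the natural isomorphism of Proposition \ref{prop:delta_nat_opd}, both $\phi^*(\delta^\ppd G)(n)$ and $\delta^\opd(\phi^* G)(n)$ are identified with the $\kring$-vector space $G(n+1)$, and on underlying $\sym_n$-modules this identification is just the identity. Thus it suffices to show that the $\kring$-linear maps
\[
\mut_{\phi^* G}(n),\ \phi^*\widetilde{(\phi\mu)}_G(n) \ : \ G(n+1) \longrightarrow G(n)
\]
coincide.

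Next I would unwind the definitions. By Definition \ref{defn:mut} applied in $\fopd$, the map $\mut_{\phi^* G}(n)$ is induced by the element $\sum_{i=1}^n \mu_i(n) \in \cat\opd(n+1, n)$, acting on $\phi^* G$. By definition of $\phi^*$, this action is computed by first applying $\cat \phi$ and then using the $\cat \ppd$-module structure of $G$. The key observation is that $\cat \phi$ is a symmetric monoidal functor sending the operadic generator $\mu \in \opd(2)$ to $\phi\mu \in \ppd(2)$, and the construction of $\mu_i(n)$ in the notation preceding Definition \ref{defn:mut} is natural in the operad: $\cat\phi(\mu_i(n)) = (\phi\mu)_i(n)$. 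By $\kring$-linearity, $\cat\phi\bigl(\sum_{i=1}^n \mu_i(n)\bigr) = \sum_{i=1}^n (\phi\mu)_i(n)$, which is precisely the element inducing $\widetilde{(\phi\mu)}_G(n)$ applied to $G$.

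Finally, naturality in $G$ is automatic, since both sides are constructed by acting via fixed morphisms of $\cat \opd$ (respectively $\cat \ppd$). I do not expect a significant obstacle here: the proposition is essentially a bookkeeping statement reflecting that both $\delta$ and $\mut$ are defined intrinsically from the PROP-theoretic data and therefore commute with the restriction functor $\phi^*$. The only care needed is in verifying the compatibility $\cat\phi(\mu_i(n)) = (\phi\mu)_i(n)$, which follows from the explicit description of the morphisms of $\cat \opd$ in equation (\ref{eqn:cat_opd}) together with the fact that $\cat\phi$ is the identity on the underlying set-theoretic data and is given by $\phi$ on each tensor factor.
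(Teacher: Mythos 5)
Your proposal is correct and follows essentially the same route as the paper's proof: both reduce the statement to the observation that $\cat\phi$ sends $\sum_{i=1}^n \mu_i(n) \in \cat\opd(n+1,n)$ to the corresponding element of $\cat\ppd(n+1,n)$ built from $\phi\mu$, so that the action on $\phi^*G$ agrees with the restriction of the action on $G$. Your version merely spells out the pointwise identification and the naturality in $G$ a little more explicitly.
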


\begin{proof}
The natural transformation $\mut(n)$ is induced by $\sum_{i=1}^n \mu_i (n) \in \cat \opd (n+1, n)$. Applying $\phi : \cat \opd \rightarrow \cat \ppd$ gives the corresponding element of $\cat \ppd (n+1, n)$ associated to $\phi\mu \in \ppd (2)$. The result follows since the functor $\phi^* : \fppd \rightarrow \fopd$ is given by restriction along $\phi : \cat \opd \rightarrow \cat \ppd$.
\end{proof}

\begin{cor}
\label{cor:mu_phimu}
For $G \in \ob \fppd$, the following conditions are equivalent:
\begin{enumerate}
\item 
$G \in \ob \fppd^{\phi \mu}$; 
\item 
$\phi^* G \in \ob \fopd ^\mu$. 
\end{enumerate}
In particular, the functor $\phi^* : \fppd \rightarrow \fopd$ restricts to 
$ 
\phi^* : \fppd^{\phi \mu} \rightarrow \fopd^\mu.
$

Moreover, there are natural isomorphisms:
\begin{eqnarray*}
\phi^* (G^{\phi \mu}) & \cong & (\phi^* G)^\mu \\
\phi^* (\kappa_{\phi \mu } G) &\cong & \kappa_\mu (\phi^* G). 
\end{eqnarray*}
\end{cor}

\begin{proof}
By definition, $G$ belongs to $\fppd^{\phi \mu}$ if and only if the natural transformation $\widetilde{(\phi \mu)}_G : \delta^\ppd G \rightarrow G$ is zero.  By Lemma \ref{lem:conservative}, this is equivalent to the condition that $\phi^* \widetilde{(\phi \mu)}_G$ is zero. By Proposition \ref{prop:compatibility_mu}, this is equivalent to the condition that 
$\mut_{\phi^*G} : \delta^\opd \phi^*G \rightarrow \phi^* G$ is zero. Finally, by definition, the latter condition is equivalent to $\phi^* G$ belonging to $\fopd^\mu$, as required.

For the final isomorphisms, consider the exact sequence 
\[
0
\rightarrow 
\kappa _{\phi \mu} G 
\rightarrow 
\delta^\ppd G
\stackrel{\widetilde{\phi \mu }} {\rightarrow}
G
\rightarrow 
G^{\phi \mu}
\rightarrow 
0
\]
that constructs $\kappa _{\phi \mu} G $ and $G^{\phi \mu}$. Applying the exact restriction functor $\phi^*$, one deduces the result by the five-lemma, using  the commutative square of Proposition \ref{prop:compatibility_mu} and its vertical  isomorphisms.
\end{proof}

\begin{exam}
\label{exam:lie_leib_mu}
Consider the morphism of operads $\leib \rightarrow \lie$, using the canonical generator $\mu \in \leib (2)$.  Then, by Corollary \ref{cor:mu_phimu}, the restriction functor $\flie \rightarrow \fleib$ detects $\flie^\mu$. The restriction $\flie \rightarrow \fleib$ is a fully-faithful embedding and one has the following `pull-back' diagram of embeddings:
\[
\xymatrix{
\flie^\mu
\ar[r]
\ar[d]
&
\fleib^\mu 
\ar[d]
\\
\flie 
\ar[r]
&
\fleib.
}
\]
\end{exam}

\section{Stability of $\fopd^\mu$ under convolution}
\label{sect:mu_properties}

In this section, we consider the stability of $\fopd^\mu$ under the `convolution product' $\odot$ on $\fopd$.
For simplicity, we suppose that the operad $\opd$ is reduced and that the operad unit induces an isomorphism $\opd (1) \cong \kring$.

The convolution product $\odot$ on $\fopd$ is the analogue   of $\rconv$ for right $\cat\opd$-modules that was used in Section \ref{subsect:proj_right_modules}.

\begin{prop}
\cite{P_analytic}
\label{prop:left_convolution}
There is a  convolution product $\conv$ on $\fopd$ that  yields a symmetric monoidal structure $(\fopd, \conv , \kring)$.
This satisfies the following properties:
\begin{enumerate}
\item 
For $\opd=I$, via the identification $\f_I \cong \smodug$, this is  the usual convolution product of left $\kring\fb$-modules.
\item 
The forgetful functor $\fopd \rightarrow \f_I \cong \smodug$ induced by the operad unit $I \rightarrow \opd$ is symmetric monoidal.
\item 
The bifunctor $\conv$ is exact with respect to both variables.
\end{enumerate}
\end{prop}

To prove the results of this section, we require to recall the construction. For this it is convenient to use the standard `coordinate-free' approach, extending the objects of $\cat\opd$ to allow arbitrary finite sets by  using  Kan extension; likewise in considering the operad $\opd$. 

Consider left $\cat \opd$-modules $F, G$; then, evaluated on a finite set $Z$, one has 
\[
(F \conv G) (Z) = \bigoplus _{X \amalg Y = Z} F(X) \otimes G(Y)
\]
where the sum is indexed over ordered decompositions of $Z$ into two subsets (possibly empty). We must  specify the action of $\cat \opd (Z, W)$, for finite sets $Z$, $W$. Under the hypotheses on $\opd$ in force, this decomposes into components indexed by {\em surjective} set maps $f : Z \twoheadrightarrow W$: 
\[
\cat \opd (Z, W) 
\cong 
\bigoplus_{f : Z \twoheadrightarrow W}
\cat \opd (Z, W) _f 
\]
where $\cat \opd (Z, W) _f := \bigotimes_{w \in W} \opd (f^{-1}(w))$. (This is the `coordinate-free' analogue of (\ref{eqn:cat_opd}).) 
 Thus it suffices to consider the action of $\cat \opd (Z, W) _f $  on $F(X) \otimes G(Y)$, for a fixed decomposition $Z = X \amalg Y$.

If $f(X) \cap f(Y) \neq \emptyset$, then this component acts by zero. Otherwise $W = f(X) \amalg f(Y)$ and, with respect to these identifications,  $f$ is given as the disjoint union of $f|_X : X \twoheadrightarrow f(X)$ and $f|_Y : Y \twoheadrightarrow f(Y)$. This induces a decomposition:
\[
\cat \opd (Z, W)_f
\cong 
\cat \opd (X, f(X))_{f|_X}
\otimes 
\cat \opd (Y, f(Y))_{f|_Y}.
\]

Using this, the left $\cat \opd$-structures of $F$ and $G$ yield the action of $\cat \opd (Z, W)_f$:
\[
F(X) \otimes G(Y) \rightarrow F(f(X)) \otimes G(f(Y) ) \subset (F \odot G) (W). 
\]
This describes the $\cat \opd$-structure of $F \odot G$.

\begin{lem}
\label{lem:delta_mu_conv}
For $F , G \in \ob \fopd$, 
\begin{enumerate}
\item 
there is a natural isomorphism $\delta (F \conv G) \cong (\delta F) \conv G \ \oplus \ F \conv (\delta G)$; 
\item 
with respect to this isomorphism, the map $\mut_{F\conv G} : \delta (F \conv G) \rightarrow F \conv G$ identifies with the sum of the maps 
$\mut_F \conv \id_G $ and $\id_F \conv  \mut_G$. 
\end{enumerate}
\end{lem}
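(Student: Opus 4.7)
The plan is to prove both statements by working with the finite-set reformulation of the convolution, $(F \conv G)(Z) = \bigoplus_{Z = X \amalg Y} F(X) \otimes G(Y)$, and tracking the action of the operations $\mu_i$ summand by summand. For part (1), since $\delta$ corresponds to adjoining an extra point $*$,
\[
\delta (F \conv G)(Z) = (F \conv G)(Z \sqcup \{*\}) = \bigoplus_{Z \sqcup \{*\} = X \amalg Y} F(X) \otimes G(Y).
\]
Splitting this direct sum according to whether $* \in X$ or $* \in Y$, and writing $X = X' \sqcup \{*\}$ or $Y = Y' \sqcup \{*\}$ accordingly, the two subsums rewrite as $((\delta F) \conv G)(Z)$ and $(F \conv (\delta G))(Z)$ via the identifications $F(X' \sqcup \{*\}) = \delta F(X')$ and $G(Y' \sqcup \{*\}) = \delta G(Y')$. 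Naturality in $Z$ (and hence compatibility with the $\cat \opd$-action), as well as in $F$ and $G$, is immediate from the construction.

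For part (2), under the decomposition just obtained, the morphism $\mut_{F \conv G}(Z) = \sum_{i \in Z} \mu_i(Z)$ must be analysed on each summand $F(X) \otimes G(Y) \subset (F \conv G)(Z \sqcup \{*\})$. Each $\mu_i(Z)$ is encoded by the set map $Z \sqcup \{*\} \to Z$ with unique non-singleton fibre $\{i, *\}$ over $i$, on which $\mu \in \opd (2)$ acts (with identities elsewhere). Since $\conv$ is governed by the symmetric monoidal structure $\tprop$ on $\cat \opd$ via the construction of \cite{P_analytic} recalled in Proposition \ref{prop:left_convolution}, the action on the summand $F(X) \otimes G(Y)$ must respect the partition $X \amalg Y$: the unique non-trivial fibre $\{i, *\}$ has to lie entirely on one side. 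Consequently, $\mu_i(Z)$ acts non-trivially only when $i, * \in X$ (in which case it acts as $\mu$ on the $F$-factor and identity on $G$) or when $i, * \in Y$ (symmetrically). Summing over $i \in Z$ and over the partitions produces precisely $\mut_F \conv \id_G$ on the $(\delta F) \conv G$ summand and $\id_F \conv \mut_G$ on the $F \conv (\delta G)$ summand, giving the stated identification.

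The main obstacle is rigorously justifying the vanishing of the ``cross'' contributions, where the non-trivial fibre of $\mu_i(Z)$ straddles the partition $X \amalg Y$. Two clean routes are available: either trace through the construction of $\conv$ in \cite{P_analytic} to see that operations whose support crosses the partition correspond to morphisms not in the image of $\tprop$ and act as zero on this summand; or reduce to the universal case by evaluating on projective generators $F = \cat \opd (m_1, -)$ and $G = \cat \opd (m_2, -)$, using the identification $\cat \opd (m_1, -) \conv \cat \opd (m_2, -) \cong \cat \opd (m_1 + m_2, -)$ together with the explicit description of $\delta \cat \opd (m, -)$ from Proposition \ref{prop:delta_proj} and Yoneda (Lemma \ref{lem:Yoneda}), then comparing both sides term by term.
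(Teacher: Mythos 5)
Your argument is correct and follows essentially the same route as the paper: decompose $\delta(F\conv G)(Z)=\bigoplus_{X\amalg Y=Z\sqcup\{*\}}F(X)\otimes G(Y)$ according to whether $*$ lands in $X$ or $Y$, then observe that the $\cat\opd$-module structure on $\conv$ does not mix the $F$- and $G$-factors, so the operations $\mu_i$ whose non-trivial fibre $\{i,*\}$ straddles the partition act by zero. The ``cross-term'' vanishing you flag as the main obstacle is exactly what the paper disposes of with the remark that there is no interaction between $F$ and $G$ through morphisms of $\cat\opd(n',n'')$ with $n'\neq n''$; your proposed check on the projective generators $\cat\opd(m,-)$ is a reasonable alternative way to certify the same fact.
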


\begin{proof}
In the coordinate-free approach, for $M$ a left $\cat \opd$-module, $\delta M$ is given by $\delta M (Z):= M(Z_+)$ where $Z_+ = Z\amalg \{*\}$, with the  action of morphisms via
\[
\cat \opd (Z, W) \rightarrow \cat \opd (Z_+, W_+),
\]
treating $(-)_+$ as the analogue of $- \boxplus 1$ of Lemma \ref{lem:shift}.

Then one has
$$
\delta (F \conv G) (Z) = \bigoplus _{U \amalg V = Z_+} F(U) \otimes G(V).
$$
Clearly, one must either have $+ \in U$ or $+ \in V$. It follows that the right hand side can be written as 
$$
\bigoplus _{U' \amalg V' = Z} \big (\delta F(U') \otimes G(V') \ \oplus \  F(U') \otimes \delta G(V')\big), 
$$
where the first term corresponds to $+ \in U$ and the second to $+ \in V$. This expression identifies with $\big((\delta F) \conv G \ \oplus \ F \conv (\delta G)\big) (Z)$. 

It remains to check that this identification is compatible with the action of morphisms of $\cat \opd$. This follows from the explicit description of this action given after the statement of Proposition \ref{prop:left_convolution}; this is a straightforward verification using the fact that the added basepoint $+$ does not intervene in the action of morphisms. 

The second statement is proved similarly. However, in this case, the basepoint $+$ is a key player. One works with the analogue of the map $\mu (n)$ of Notation \ref{nota:mu(n)}. This is the map 
\[
\mu (Z) \in \cat \opd (Z_+, Z)
\]
defined as the sum of maps $\mu_z \in \cat \opd (Z_+, Z)$, for $z \in Z$, where $\mu_z$ is associated with the surjection $Z_+ \twoheadrightarrow $ that is the identity on $Z$ and sends $+$ to $z$, mimicking the definition of $\mu_i (n)$. 

Then $\mu(Z)$ induces the analogue of $\tilde{\mu}(n)$:
\[
\tilde{\mu} (Z) : \delta (F \odot G) (Z) \rightarrow (F \odot G) (Z).
\]

To identity this, consider the contribution from  $\mu_z$ acting on the component $\delta F(U') \otimes G(V') \ \oplus \  F(U') \otimes \delta G(V')$ of $\delta (F \odot G) (Z)$, using the above identifications, where $U' \amalg V' = Z$. 
 If $z\in U'$, then $\mu_z$ acts via its action $\delta F(U')\rightarrow F(U')$ on the first direct summand and by zero on the second direct summand. If $z \in V'$, then $\mu_z$ acts by zero on the first summand and via its action $\delta G(V')\rightarrow G(V')$ on the second. Thus, the contribution from $\mu_z$ gives
 \[
 \delta F(U') \otimes G(V') \ \oplus \  F(U') \otimes \delta G(V')
 \rightarrow 
 F(U') \otimes G(V') \subset (F \odot G) (Z)
 \]
described by the above.

Then, summing over $z \in Z$, this gives the sum of the maps
\begin{eqnarray*}
\tilde{\mu} (U') \otimes \id &:& \delta F (U') \otimes G(V') \rightarrow F(U') \otimes G(V') \\
\id \otimes  \tilde{\mu} (V') &:& F (U') \otimes \delta  G(V') \rightarrow F(U') \otimes G(V').
\end{eqnarray*}

Putting these identifications together, one obtains the result.
\end{proof}

This gives the following stability result:

\begin{prop}
\label{prop:stable_conv}
The convolution product $(\fopd, \conv, \kring)$ restricts to a symmetric monoidal structure 
 $(\fopd^\mu, \conv, \kring)$.
 
Moreover, for $F, G \in \ob \fopd$, the canonical morphism 
\[
(F \conv G)^\mu 
\rightarrow 
F^\mu \conv G^\mu 
\]
is an isomorphism.

\end{prop}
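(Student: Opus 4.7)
The plan is to deduce both assertions from Lemma \ref{lem:delta_mu_conv} combined with the exactness of $\conv$ in each variable (Proposition \ref{prop:left_convolution}(3)) and the adjunction of Proposition \ref{prop:mu_left_adjoint}.

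For the first assertion, I would begin with the observation that if $F, G \in \ob \fopd^\mu$, then $\mut_F=0$ and $\mut_G=0$, so by Lemma \ref{lem:delta_mu_conv}(2) the map $\mut_{F \conv G}$ identifies with $\mut_F \conv \id_G + \id_F \conv \mut_G = 0$; hence $F \conv G \in \ob \fopd^\mu$. The unit $\kring$ of the convolution product is supported in arity zero, so $\delta \kring =0$ and trivially $\kring \in \ob \fopd^\mu$. The associator, unitor, and braiding of $(\fopd, \conv, \kring)$ are then inherited, since $\fopd^\mu$ is a full subcategory.

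For the second assertion, I would compute $F^\mu \conv G^\mu$ as an iterated cokernel. Applying $-\conv G$ to the right exact sequence $\delta F \xrightarrow{\mut_F} F \to F^\mu \to 0$ identifies $F^\mu \conv G$ with $\coker(\mut_F \conv \id_G)$; applying $F^\mu \conv -$ to the analogous sequence for $G$ then identifies $F^\mu \conv G^\mu$ with $\coker\bigl(\id_{F^\mu} \conv \mut_G : F^\mu \conv \delta G \to F^\mu \conv G\bigr)$. A straightforward diagram chase shows that the composite surjection $F \conv G \twoheadrightarrow F^\mu \conv G \twoheadrightarrow F^\mu \conv G^\mu$ coincides with the cokernel of the combined map $(\mut_F \conv \id_G,\ \id_F \conv \mut_G) : (\delta F \conv G) \oplus (F \conv \delta G) \to F \conv G$, which by Lemma \ref{lem:delta_mu_conv} is precisely $\mut_{F \conv G}$. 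Hence this composite surjection factors canonically as $F \conv G \twoheadrightarrow (F \conv G)^\mu \xrightarrow{\sim} F^\mu \conv G^\mu$.

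To identify the resulting isomorphism with the canonical morphism named in the statement, I would invoke the universal property of $(-)^\mu$: the natural surjection $F \conv G \twoheadrightarrow F^\mu \conv G^\mu$ has target in $\fopd^\mu$ by the first part, so by Proposition \ref{prop:mu_left_adjoint} it factors uniquely through the unit $F \conv G \twoheadrightarrow (F \conv G)^\mu$. The factoring map is exactly the composite above, and therefore is an isomorphism. I do not foresee any serious obstacle; the only point requiring mild care is verifying that the iterated cokernel really is the cokernel of the sum, which is a standard fact in any abelian category, applied here with $\conv$-exactness playing the essential role.
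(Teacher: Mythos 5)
Your proof is correct and follows essentially the same route as the paper: both parts rest on Lemma \ref{lem:delta_mu_conv} together with the exactness of $\conv$ in each variable, plus the adjunction to identify the canonical map. Your explicit iterated-cokernel computation is exactly what the paper's ``baby'' spectral sequence (obtained by filtering the associated double complex by its right-hand column) computes, so the two arguments coincide.
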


\begin{proof}
For the first statement, it suffices to prove that, if $F, G \in \ob \fopd^\mu$, then $F\conv G$ is also. 
The hypothesis on $F, G$ is equivalent to the morphisms $\mut_F$ and $\mut_G$ both being zero.  
By Lemma \ref{lem:delta_mu_conv}, this implies that $\mut_{F \conv G}=0$, as required.

Now consider the general case, with $F, G \in \ob \fopd$. The convolution product of the  morphisms $\mut_F$ and $\mut_G$ give 
 the commutative diagram:
\[
\xymatrix{
\delta F \conv \delta G
\ar[rr]^{\id_{\delta F} \conv \mut_G}
\ar[d]_{\mut_F \conv \id_{\delta G}}
&&
\delta F \conv  G
\ar[d]^{\mut_F \conv \id_{G}}
\\
F \conv \delta G
\ar[rr]_{\id_{F} \conv \mut_G}
&&
F \conv G.
}
\]
This has an associated double complex (placing $F \conv G$ in homological degree zero) and $(F\conv G)^\mu$ is calculated as the degree zero homology of the associated total complex, by Lemma \ref{lem:delta_mu_conv}.

By filtering  (taking the right hand column as a subcomplex) and considering the associated (baby) spectral sequence, the homology in degree zero is isomorphic to the cokernel of the  morphism 
\[
\xymatrix{
F^\mu \conv \delta G
\ar[rr]^{\id_{F^\mu} \conv \mut_G} 
&&
F^\mu \conv G,
}
\] 
where exactness of $-\conv \delta G$ and $ -\conv  G$ has been used to identify the terms. Then, by exactness of $F^\mu \conv -$, the cokernel is isomorphic to $F^\mu \conv G^\mu$, as required. 
\end{proof}

\part{The Lie case}
\label{part2}

We now specialize to the case $\opd= \lie$ over $\kring=\rat$, taking  $\mu \in \lie(2)$ to be the generator corresponding to $[-,-]$.  The main result explains the relationship between $\flie^\mu$ and the category of outer functors on $\gr\op$, where $\gr$ is the category of finitely-generated free groups.

%\input{lie_case}
 %%%%%%%%%%%%%%%%%%%%%%%%%%%%%%%%%%%%%%%%%%%%%%%%%%%%%%%%%%
\section{Outer functors and the statement of the main result}
\label{sect:lie_case}

After reviewing the definition of the category $\foutanQ$ of analytic outer functors on $\gr\op$, the main result is stated in Theorem \ref{thm:outer_mu}, relating this category to $\flie^\mu$. Some immediate consequences are given in Section \ref{subsect:consequences}; the proof of the Theorem is postponed until Section \ref{sect:proof_lie_outer}.

%%%%%%%%%%%%%%%%%%%%%%%%%%%%%%%%%%%%%%%%%%%%%%%%%%%%%%%%%%%%%
\subsection{Functors on $\gr\op$} 

Let $\gr$ be the category of finitely-generated free groups, considered as a full subcategory of the category of groups. This has skeleton given by the groups $\zed^{\star n}$, for $n \in \nat$,  so that one can consider the category $\fcatQ[\gr\op]$ of functors from $\gr\op$ to $\rat$-vector spaces. 

Using the free product $\star$ on $\gr$, one has the notion of a polynomial functor on $\gr\op$ (cf. \cite{MR3340364}). This in turn allows one to consider analytic functors on $\gr\op$, i.e., those that are the colimit of their sub polynomial functors. (See  \cite{P_analytic} for more details; part of the theory is revisited in \cite{P_malcev}.)

\begin{nota}
Denote by 
\begin{enumerate}
\item 
$\f_d (\gr\op ; \rat)  \subset \fcatQ [\gr\op]$ the full subcategory of polynomial functors of degree at most $d\in \nat$;  
\item 
$\f_\omega (\gr\op; \rat) \subset \fcatQ [\gr\op]$ the full subcategory of analytic functors.
\end{enumerate}
\end{nota}

The category of analytic contravariant functors is modelled by representations of $\cat \lie$:

\begin{thm}
\cite{P_analytic}
\label{thm:grop_analytic}
The category $\f_\omega (\gr\op; \rat) $ is equivalent to $\flie$.
\end{thm}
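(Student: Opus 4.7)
The plan is to exhibit an equivalence $\Phi : \flie \xrightarrow{\sim} \f_\omega(\gr\op; \kring)$ by combining two classical ingredients: the Milnor--Moore identification of the primitives of the group algebra of a free group with the free Lie algebra, and a Pirashvili-type cross-effect analysis of polynomial functors on a pointed symmetric monoidal source.

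On the $\gr\op$ side, I would first introduce the polynomial filtration coming from the free product $\star$: for each $d \in \nat$, the subcategory $\f_d(\gr\op;\kring)$ consists of functors whose $(d{+}1)$-st cross-effect (with respect to $\star$) vanishes. The cross-effect $\mathrm{cr}_n F$ is a natural $\sym_n$-representation; for an analytic $F$ the tower $(\mathrm{cr}_n F)_{n \in \nat}$ determines and is determined by $F$, giving a faithful embedding $\f_\omega(\gr\op;\kring) \hookrightarrow \smodug$. On the $\flie$ side the analogous embedding is the restriction functor $(-)\downarrow$ of Lemma \ref{lem:restrict_fb}. So the real task is to match the \emph{extra} structure carried by these $\fb$-modules on the two sides.

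To do this, I would construct a ``linearization'' functor $\mathcal{L} : \gr\op \to \cat\lie$, at the level of polynomial quotients, using the Magnus/Malcev expansion. The group algebra $\kring[\fg_n]$ is a cocommutative Hopf algebra whose $J$-adic completion (with $J$ the augmentation ideal) has primitives isomorphic to the completed free Lie algebra on $n$ generators, by Milnor--Moore; a group homomorphism $\fg_m \to \fg_n$ induces a continuous Hopf map on completions, hence a Lie-algebra map on primitives, which after truncation to any fixed polynomial degree yields a genuine morphism in $\cat\lie(n,m)$. Precomposition with $\mathcal{L}$ sends $F \in \flie$ to a contravariant functor $\Phi(F)$ on $\gr$; it lands in $\f_\omega(\gr\op; \kring)$ because the arity filtration on $\cat\lie$ corresponds exactly to the polynomial filtration induced by $\star$. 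For the inverse $\Psi$, I would set $\Psi(F)(n) := \mathrm{cr}_n F$, with $\cat\lie$-action produced by letting generating Lie-operations act on cross-effects through morphisms of free groups realizing iterated commutators; the non-trivial input is precisely that this action factors through $\cat\lie$ rather than a strictly larger PROP.

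Equivalence can then be checked by matching projective generators: $\cat\lie(m,-)$ on the $\flie$ side should correspond under $\Phi$ to the polynomial cover of the representable $\kring[\hom_\gr(-,\fg_m)]$, while the inverse claim reduces, arity by arity, to an iterated five-lemma on the respective polynomial filtrations. The main obstacle I expect is constructing $\mathcal{L}$ \emph{functorially} and verifying that the polynomial-degree truncation eliminates the completion, so that arbitrary morphisms of free groups act on polynomial cross-effects through the ordinary (uncompleted) PROP $\cat\lie$. This rests on two classical inputs, which would need to be set up carefully in the present functorial context: (i) the identification of the associated graded of the $J$-adic filtration of $\kring[\fg_n]$ with $U(\mathcal{L}(n))$, making the Lie-PROP structure visible at each polynomial order, and (ii) the fact that the Magnus expansion of an arbitrary homomorphism of free groups has genuinely Lie-polynomial components at every order, so that the induced action on cross-effects is controlled by $\cat\lie$ and not merely by some associative or pre-Lie enlargement.
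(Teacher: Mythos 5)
The construction of your forward functor $\Phi$ is where the argument breaks. There is no functor $\mathcal{L} : \gr\op \rightarrow \cat\lie$ sending $\fg_n \mapsto n$ for which precomposition gives the equivalence, and the obstruction is not only the completion issue you flag. First, the candidate morphisms do not lie in $\cat\lie$ at any polynomial truncation: the fold map $\fg_1 \rightarrow \fg_2$, $x \mapsto x_1 x_2$, induces on primitives the Baker--Campbell--Hausdorff series $x \mapsto x_1 + x_2 + \tfrac{1}{2}[x_1,x_2]+\cdots$, and already its linear term $(u,v) \mapsto u+v$ is not a multilinear operation, whereas $\cat \lie (2,1) = \lie(2)$ is one-dimensional, spanned by the bracket; because $\lie$ is reduced, $\cat\lie$ has no room for substitutions that drop or repeat variables. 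Second, and decisively, precomposition would force $\Phi(F)(\fg_n) = F(n)$, which are the wrong values: by the Poincar\'e--Birkhoff--Witt identification (Proposition \ref{prop:PBW}) the analytic functor corresponding to $F$ has value $\bigoplus_m \kring\mathbf{Fin}(m,n)\otimes_{\sym_m} F(m)$ at $\fg_n$ --- for $F = \underline{\g}$ this is $(U\g)^{\otimes n}$, not $\g^{\otimes n}$. Thus $F(n)$ is recovered as the $n$th \emph{cross-effect}, not as the value at $\fg_n$, so your $\Phi$ cannot be quasi-inverse to your $\Psi$. For the same reason your check on projectives fails: $\cat\lie(m,-)$ corresponds to $\gropcatuass(m,-)$, whose value $\cat\uass(m,n)$ is strictly larger than $\cat\lie(m,n)$ already for $m=2$, $n=1$.

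Your inverse direction $\Psi$ is the right idea and agrees with the equivalence $\gamma$ of Remark \ref{rem:gamma}, and you correctly isolate the genuine content, namely that the $\gr\op$-action on cross-effects is governed by $\cat\lie$. But the equivalence (established in the cited reference, not reproved here) is a Morita equivalence implemented by the bimodule $\gropcatuass$: one takes $\cat\uass$, regards $\cat\uass(-,n)$ as a polynomial model of the exponential functor of the primitively generated tensor algebra, and uses its Hopf structure to define the left $\kring\gr\op$-action --- this is exactly where your BCH and Magnus-expansion data live, on the associative side rather than inside $\cat\lie$. The quasi-inverse to $\gamma$ is $\gropcatuass \otimes_{\cat\lie} -$, not restriction along any functor of underlying categories; to repair your proof you would need to replace $\mathcal{L}^{*}$ by this induction functor and then verify fullness and faithfulness on the projective generators $\cat\lie(m,-) \mapsto \gropcatuass(m,-)$.
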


The equivalence is made explicit as follows, using that $\flie$ can be considered as the category of left $\cat \lie$-modules. The presentation given here is  equivalent to that of \cite{P_analytic}, but uses the construction of the `twisting bimodule' adopted in \cite{P_malcev}.

\begin{nota}
Let $\uass$ denote the  operad encoding unital associative algebras, with morphism of operads $\lie \rightarrow \uass$ encoding the underlying Lie algebra of an associative unital algebra. 
\end{nota}

Now, $\lie$ can be considered as a Lie algebra in the category $\rmod$ of right $\cat\lie$-modules, equipped with its symmetric monoidal structure $(\rmod, \rconv, \rat)$ (see \cite[Observation 9.1.3]{MR2494775}, for example). Mimicking the classical construction of the universal enveloping algebra of a Lie algebra, working in $(\rmod, \rconv, \rat)$, one can form the unital associative algebra $U \lie$ in $\rmod$. The morphism $\lie \rightarrow \uass$ is a morphism of Lie algebras in $\rmod$ and the induced morphism 
\[
U\lie \stackrel{\cong}{\rightarrow} \uass
\]
is an isomorphism of unital associative algebras in $\rmod$.  Moreover, $\lie \rightarrow \uass$ corresponds to the canonical inclusion $\lie \hookrightarrow U \lie$.

The advantage of this viewpoint on $\uass$ is that, as for the classical universal enveloping algebra of Lie algebras, $U\lie$ has the structure of a cocommutative Hopf algebra in $\rmod$. This allows one to apply the exponential functor construction $\Phi$, using the notation of \cite{PV}, to form the functor $\Phi (U\lie)$ from $\gr\op$ to $\rmod$. (See \cite{PV} or the review in \cite{P_analytic} for some more details on $\Phi$.)

In the case at hand, by the definition of $\Phi$,
\[
\Phi (U \lie) (\zed^{\star n}) = (U \lie)^{\rconv n},
\]
with action of morphisms of $\gr\op$ determined by the Hopf algebra structure of $U\lie$. 

We can consider $\Phi (U \lie)$ as a left $\rat\gr\op$, right $\cat\lie$ bimodule. The equivalence of Theorem \ref{thm:grop_analytic} is given by the functor 
\[
\Phi (U\lie) \otimes_{\cat \lie} - \ : \ \flie \stackrel{\cong}{\rightarrow} \f_\omega(\gr\op; \rat).
\] 

\begin{rem}
The left $\rat\gr\op$, right $\cat\lie$ bimodule $\Phi (U\lie)$ is isomorphic to $\gropcatuass$ used in \cite{P_analytic}.  By construction, for $M\in \ob \flie$, when  evaluated on $\zed^{\star n}$, the functor $\Phi (U\lie) \otimes_{\cat \lie} M$ gives 
$$\cat\uass (-,n) \otimes_{\cat\lie} M,$$
 where the right $\cat\lie$-module structure on $\cat\uass$ is given by restriction of the canonical right $\cat\uass$-module structure along the induced $\cat \lie \rightarrow \cat \uass$.
\end{rem}

\begin{exam}
\label{exam:cat_lie_m}
For $m \in \nat$, $\cat \lie (m, -)$ is a $\cat \lie$-module; by  Lemma \ref{lem:Yoneda}, it is projective and 
$$
\{ \cat \lie (m, -) \ | \ m \in \nat \}$$
 is a set of projective generators for $\cat \lie$. 

Under the equivalence of Theorem \ref{thm:grop_analytic}, $\cat \lie (m, -)$ corresponds to $\Phi (U\lie) (m)$, the $\rat\gr\op$-module obtained by evaluating the bimodule $\Phi (U\lie)$ on $m \in \ob \cat\lie$. Hence 
$\{ \Phi (U \lie) (m) \ | \ m \in \nat\}$ is a set of projective generators of $\f_\omega (\gr \op; \rat)$.

In the notation of \cite{P_analytic}, $\Phi (U \lie ) (m)$ identifies as $\gropcatuass (m,-)$. 
\end{exam}

One has the following useful characterization of $\cat \lie (m, -)$:

\begin{prop}
\label{prop:proj_cover_flie}
For $m \in \nat$, $\cat \lie (m, -)$ is the projective cover of $\rat \sym_m$ in $\flie$, where $\rat\sym_m$ is considered as a $\cat\lie$-module supported on $\mathbf{m}$.  
\end{prop}

\begin{proof}
That $\cat\lie (m, -)$ is projective is given by Lemma \ref{lem:Yoneda}.

Now, $\cat\lie (m, n)$ is zero if $n>m$ and, for $n=m$, is isomorphic to $\rat \sym_m$. It follows that there is a surjection
\[
\cat \lie (m, - ) \twoheadrightarrow \rat \sym_m
\]
with kernel that is an isomorphism when evaluated on $m$. 

That this exhibits $\cat \lie (m, -)$ as the projective cover of $\rat \sym_m$ is then a straightforward consequence of the Yoneda lemma.
\end{proof}

%%%%%%%%%%%%%%%%%%%%%%%%%%%%%%%%%%%%%%%%%%%%%%%%%%%%%%%%%%%%%%%%
\subsection{Outer functors}

The following category of outer functors was introduced in \cite{PV}.

\begin{defn}
\label{defn:fout}
Let 
\begin{enumerate}
\item
$\foutQ \subset \fcatQ[\gr\op]$ be the full subcategory with objects functors $F  \in \ob \fcatQ[\gr\op]$ such that, for all $n \in \nat$, the subgroup of inner automorphisms $\mathrm{Inn} (\zed^{\star n}) \subset \mathrm{Aut} (\zed^{\star n})$ acts trivially on $F (\zed^{\star n}) $. 
\item 
$\foutanQ \subset \f_\omega (\gr\op; \rat)$ be the full subcategory of analytic outer functors.
\end{enumerate}
\end{defn}

Below, $\fgrp$ is used to denote an object of $\gr$ (and, hence, of $\gr\op$).

\begin{nota}
Denote by
\begin{enumerate}
\item 
$\tau : \fcatQ[\gr\op] \rightarrow \fcatQ[\gr\op]$ the shift functor defined on $F \in \ob \fcatQ[\gr\op]$ by
$(\tau F ) (\fgrp):= F ( \fgrp \star \zed)$;
\item 
$\tbar : \fcatQ[\gr\op] \rightarrow \fcatQ[\gr\op]$  the reduced shift functor that is defined by the canonical splitting 
$\tau F \cong F \oplus \tbar F$ induced by the group morphisms $\{e \} \rightarrow \zed \rightarrow \{e \}$.
\end{enumerate}
\end{nota}

\begin{defn}
\label{defn:rho_rbar}
Let 
\begin{enumerate}
\item 
$\rho : \tau \rightarrow \id$ be the natural transformation of functors on $\fcatQ[\gr\op]$  induced by the `universal conjugation' map $\fgrp \rightarrow  \fgrp \star \zed$, $g \mapsto x^{-1} g x$, for $x$ the generator of the subgroup $\zed \subset \fgrp \star \zed$;
\item
$\rbar :  \tbar \rightarrow \id$ be the composite natural transformation 
$ 
\tbar  \hookrightarrow \tau  \stackrel{\rho}{\rightarrow} \id.
$
\end{enumerate}
\end{defn}

\begin{rem}
\label{rem:conj_right}
The  definition of $\rho$ involves the choice that the conjugation acts on the right; this is consistent with the choice  stressed in Remark \ref{rem:mut_right}.
\end{rem}

The following is a standard  consequence of the definition of polynomial functors on $\gr\op$:

\begin{lem}
\label{lem:tbar_analytic}
The functor $\tbar : \fcatQ[\gr\op] \rightarrow \fcatQ[\gr\op]$ restricts to 
$ 
\tbar : \f_\omega (\gr\op; \rat)
\rightarrow 
\f_\omega (\gr\op; \rat).
$
\end{lem}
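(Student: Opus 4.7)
The plan is to exploit two properties of $\tbar$: that it sends polynomial functors to polynomial functors (decreasing the polynomial degree by one), and that it commutes with filtered colimits. Combined, these imply that the image of an analytic functor is again analytic.

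First I would record the exactness and colimit-preservation of $\tbar$. The shift $\tau$ is pre-composition with the endofunctor $-\star \fg_1 : \gr \rightarrow \gr$, hence is exact; the natural splitting $\tau \cong \id \oplus \tbar$ of Definition \ref{defn:rho_rbar} then shows that $\tbar$ is exact as a direct summand. Since $\tbar F(\fgrp)$ is computed as a natural summand of $F(\fgrp \star \fg_1)$ and colimits in $\fcatk[\gr\op]$ are computed object-wise, $\tbar$ commutes with arbitrary (in particular filtered) colimits.

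Next I would invoke the standard reformulation of polynomiality for functors on $\gr\op$ in terms of iterated reduced shifts with respect to $\star$: a functor $F \in \ob \fcatk[\gr\op]$ is polynomial of degree $\leq d$ if and only if $\tbar^{d+1} F = 0$. In particular, $\tbar$ sends $\f_d (\gr\op; \kring)$ into $\f_{d-1} (\gr\op; \kring)$ (with the convention $\f_{-1} = 0$), so $\tbar$ certainly restricts to the subcategory of polynomial functors.

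Given $F \in \ob \f_\omega (\gr\op; \kring)$, I would write $F$ as the filtered colimit $F = \colim_d p_d F$ of its polynomial subfunctors $p_d F \subset F$ of degree at most $d$. The exactness of $\tbar$ ensures that each $\tbar (p_d F) \hookrightarrow \tbar F$ is a subfunctor, polynomial of degree at most $d-1$ by the previous step, and compatibility of $\tbar$ with filtered colimits gives $\tbar F = \colim_d \tbar(p_d F)$. This exhibits $\tbar F$ as the colimit of its polynomial subfunctors, hence as an analytic functor. The only non-formal input is the cross-effect characterization of polynomial degree via $\tbar$, which is the main (and only) obstacle, but it is essentially part of the setup of polynomial functors on $\gr\op$ (cf.\ \cite{MR3340364}, \cite{P_analytic}); the remainder of the argument is purely formal.
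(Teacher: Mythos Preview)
Your argument is correct and is precisely the ``standard consequence of the definition of polynomial functors on $\gr\op$'' that the paper invokes in lieu of a proof: the paper does not spell out the argument, so your unpacking of it (exactness and colimit-preservation of $\tbar$, the characterization of polynomial degree via vanishing of iterated $\tbar$, and passage to the filtered colimit $F = \varinjlim_d p_d F$) is exactly what is intended. One minor remark: the macro \verb|\colim| is not defined in the paper, so if this is to be inserted verbatim you should use $\varinjlim$ or $\mathrm{colim}$ instead.
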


The category $\foutQ$ identifies in terms of $\rbar$ as follows:

\begin{prop}
\label{prop:fout_equivalent}
The category $\foutQ$ (respectively $\foutanQ$) is equivalent to the full subcategory of $\fcatQ[\gr\op]$ (resp. $\f_\omega (\gr\op; \rat)$) of functors for which $\rbar=0$.
\end{prop}

\begin{proof}
 Clearly the statement for analytic functors follows from that for $\foutQ$. The latter is essentially established in \cite[Section 11]{PV}, where the functor $\Omega : \f (\gr\op; \rat)\rightarrow \foutQ $ left adjoint to the inclusion is defined.   By construction, a functor $F$ is in $\foutQ$ if and only if the adjunction unit $F \rightarrow \Omega F$ is an isomorphism. 
  
Now, by \cite[Proposition 11.8]{PV} (where different notation is used), the functor $\Omega$ is naturally isomorphic to the cokernel of $\rbar$. The result follows. 
\end{proof}

On restricting to analytic functors, Proposition \ref{prop:fout_equivalent} (and its proof) has the following Corollary, the analogue of Proposition \ref{prop:mu_left_adjoint}:

\begin{cor}
\label{cor:left_adjoint_out}
The inclusion $\foutanQ \subset \f_\omega (\gr\op; \rat)$ has left adjoint given by $ F\mapsto \mathrm{coker} \rbar_F$.
\end{cor}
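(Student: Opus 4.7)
The plan is to mirror the proof of Proposition \ref{prop:mu_left_adjoint} almost verbatim, swapping the pair $(\delta^\opd, \mut)$ for $(\tbar, \rbar)$ and $\fopd^\mu$ for $\foutan$, using Corollary \ref{cor:foutan_rbar} to characterize analytic outer functors as those $F \in \ob \f_\omega(\gr\op;\kring)$ for which $\rbar_F = 0$. Two ingredients are needed: first, that $\mathrm{coker}\ \rbar_F$ itself lies in $\foutan$; and second, that any morphism $f : F \to G$ with $G \in \ob \foutan$ factors uniquely through the canonical surjection $F \twoheadrightarrow \mathrm{coker}\ \rbar_F$.

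For the first ingredient, I would follow the template of Proposition \ref{prop:coker_mu}. The analytic subcategory is closed under quotients in $\fcatk[\gr\op]$, so $\mathrm{coker}\ \rbar_F$ remains analytic; combined with Lemma \ref{lem:tbar_analytic}, this keeps the whole naturality square for $\rbar$ applied to the surjection $F \twoheadrightarrow \mathrm{coker}\ \rbar_F$ inside $\f_\omega(\gr\op;\kring)$. The composite $\tbar F \xrightarrow{\rbar_F} F \twoheadrightarrow \mathrm{coker}\ \rbar_F$ is zero by definition of the cokernel, while the map $\tbar F \to \tbar (\mathrm{coker}\ \rbar_F)$ is an epimorphism by exactness of $\tbar$. (Exactness of $\tbar$ follows from the fact that it is a direct summand of the exact functor $\tau$, the exactness of $\tau$ being immediate since it is induced by precomposition with $(-) \star \fg_1$.) Hence $\rbar_{\mathrm{coker}\ \rbar_F} = 0$, placing the cokernel in $\foutan$ via Corollary \ref{cor:foutan_rbar}.

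For the second ingredient, naturality of $\rbar$ applied to $f : F \to G$ yields a commutative square with bottom edge $\rbar_G = 0$; this forces $f \circ \rbar_F = 0$, so $f$ factors uniquely through the canonical surjection $F \twoheadrightarrow \mathrm{coker}\ \rbar_F$. This is the required natural bijection $\hom_{\foutan}(\mathrm{coker}\ \rbar_F, G) \cong \hom_{\f_\omega(\gr\op;\kring)}(F, G)$, establishing the adjunction.

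There is no genuine obstacle here: the argument is purely formal, consisting of Yoneda-style manipulations analogous to those of Proposition \ref{prop:mu_left_adjoint}. The only technical input beyond the definitions is the exactness of $\tbar$ on $\f_\omega(\gr\op;\kring)$, which is automatic from Lemma \ref{lem:tbar_analytic} together with the splitting $\tau \cong \id \oplus \tbar$.
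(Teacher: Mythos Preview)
Your proposal is correct and follows exactly the approach the paper indicates: the paper simply states that the proof ``is formally the same as that of Proposition \ref{prop:mu_left_adjoint}'', and you carry out precisely that formal translation, supplying the analogue of Proposition \ref{prop:coker_mu} and noting the exactness of $\tbar$ needed to make the argument go through.
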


%%%%%%%%%%%%%%%%%%%%%%%%%%%%%%%%%%%%%%%%%%%%%%%%%%%%%%%%%%%%%%%%%%%%%%
\subsection{Statement of the main result}
\label{subsect:consequences}

The following counterpart of Theorem \ref{thm:grop_analytic} for outer analytic functors is the main result of Part \ref{part2}; it will be proved in Section \ref{sect:proof_lie_outer}, where it is reformulated slightly more explicitly as Theorem \ref{thm:outer_mu_explicit}.

\begin{thm}
\label{thm:outer_mu}
Under the equivalence  of Theorem \ref{thm:grop_analytic}, the full subcategory $\foutanQ \subset \f_\omega (\gr\op; \rat)$ identifies with $\flie^\mu \subset \flie$.
\end{thm}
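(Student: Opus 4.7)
Plan. The strategy is to match the two defining conditions under the equivalence $\gamma$ of Theorem~\ref{thm:grop_analytic}: by Corollary~\ref{cor:foutan_rbar}, $\foutan$ is the vanishing locus of $\rbar : \tbar \to \id$, while by Definition~\ref{defn:fopd^mu}, $\flie^\mu$ is the vanishing locus of $\mut : \delta \to \id$. The plan has two main steps: first, identify $\gamma \circ \tbar$ with $\delta \circ \gamma$; second, verify that under this identification, $\gamma(\rbar)$ becomes $\mut$. Once both are established, an object $G \in \f_\omega (\gr\op; \kring)$ will satisfy $\rbar_G = 0$ if and only if $\mut_{\gamma G} = 0$, giving the claimed equivalence of subcategories.

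For the first step, the key observation is that $\tbar H(\fg_n) = \ker(H(\fg_{n+1}) \to H(\fg_n))$, where the map is induced by the inclusion $\fg_n \hookrightarrow \fg_{n+1}$ splitting off the extra $\fg_1$ factor. For $H$ polynomial of degree $\leq n+1$, a direct manipulation of the cross-effect kernels gives $\tr_n(\tbar H) = \tr_{n+1}(H)$: the intersection of kernels defining $\tr_n(\tbar H)$ is exactly the intersection defining $\tr_{n+1}(H)$ together with the extra kernel built into the definition of $\tbar$. Applying this to $H = p_{n+1} G$ and using that $p_n \tbar G = \tbar p_{n+1} G$ for analytic $G$ yields
\[
\gamma(\tbar G)(n) = \tr_n (\tbar p_{n+1} G) = \tr_{n+1}(p_{n+1} G) = \gamma G (n+1) = \delta \gamma G (n)
\]
as $\fb$-modules. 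One then checks that this isomorphism is compatible with the full $\cat \lie$-module structures by tracking the operadic generators through the equivalence.

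For the second step, I aim to verify the identification $\gamma(\rbar_G) = \mut_{\gamma G}$ on the projective generators, where it is most tractable. This expresses the classical principle that universal conjugation $g \mapsto x^{-1} g x$ is, to first order in $x$, the Lie bracket $[g, x]$; the right-sided conventions of Remarks~\ref{rem:mut_right} and~\ref{rem:conj_right} are exactly what ensures the signs match. Concretely, on $\gropcatuass(m, -)$, $\rbar$ is induced by the universal conjugation acting on associative words, and its image in the top cross-effect (which corresponds to $\cat \lie(m, -)$ under $\gamma$, by Example~\ref{exam:cat_lie_m}) extracts precisely the first-order-in-$x$ piece $\sum_i [\cdot , x]$, matching $\mut_{\cat \lie(m,-)} = \sum_i \mu_i$ by definition.

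The main obstacle I anticipate is the final verification in the second step: making the linearization of conjugation rigorous at the level of the equivalence $\gamma$. This requires explicit tracking through the $\cat \uass$--$\cat \lie$ bimodule $\gropcatuass$, whose structure incorporates the Hopf algebra coproduct and antipode of $\cat \uass$, to see how the antipode appearing in conjugation corresponds to the Lie bracket. Once established for the projective generators, the full identification will follow by naturality: both $\rbar$ and $\mut$ are natural transformations into the identity functor, their kernels are closed under subobjects, quotients, and direct sums (Proposition~\ref{prop:closure} on the Lie side, analogously on the group side), and the associated left adjoints (cokernels) are right exact and hence determined by their values on the projective generators $\gropcatuass(m, -) \leftrightarrow \cat \lie(m, -)$.
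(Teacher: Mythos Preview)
Your first step contains a genuine error: the identification $\gamma \circ \tbar \cong \delta \circ \gamma$ is false. The paper computes (Proposition~\ref{prop:gamma_tbar}) that
\[
\gamma(\tbar F) \cong \bigoplus_{k \geq 1} \delta^{(k)}(\gamma F),
\]
where $\delta^{(k)} G(n) = G(n+k)/\sym_k$; thus $\delta(\gamma F)$ is only the $k=1$ summand. You can see this directly for $F = \Phi(U\g)$: one has $\tbar F \cong \Phi(U\g) \otimes \overline{U\g}$ (the augmentation ideal as a constant factor), so $\gamma(\tbar F)(n) \cong \g^{\otimes n} \otimes \overline{U\g}$, which by PBW is $\bigoplus_{k \geq 1} \g^{\otimes n} \otimes (\g^{\otimes k}/\sym_k)$, not $\g^{\otimes (n+1)}$.

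The specific mistake is the claim $p_n \tbar G = \tbar p_{n+1} G$. The inclusion $\tbar p_{n+1} G \subset p_n \tbar G$ holds, but not the reverse: if $G' := G/p_{n+1}G$, then $\gamma G'$ vanishes in degrees $\leq n+1$, yet $\gamma(\tbar G')(m) = \bigoplus_{k \geq 1}\gamma G'(m+k)/\sym_k$ is nonzero for $m \leq n$ via the terms with $k \geq 2$. Hence $p_n \tbar G$ genuinely receives contributions from the higher polynomial filtration of $G$. Your cross-effect identity $\tr_n(\tbar H) = \tr_{n+1}(H)$ is correct, but you are applying it to the wrong $H$.

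Because $\gamma(\rbar_F)$ therefore has components beyond $\mut_{\gamma F}$, the converse direction ($\mut_{\gamma F}=0 \Rightarrow \rbar_F = 0$) does not follow from your identification. The paper handles this asymmetry by introducing the filtration $\tbar^{[\leq K]}F \subset \tbar F$ (corresponding to the partial sums $\bigoplus_{k \leq K}\delta^{(k)}$), proving your intended identification only on $\tbar^{[\leq 1]}$ (Proposition~\ref{prop:restrict_rho_delta}), and then showing by induction on $K$ that vanishing on $\tbar^{[\leq 1]}$ forces vanishing on each $\tbar^{[\leq K]}$. The inductive step uses that the iterated conjugation $\rbar^K$ factors as $\rbar \circ \overline{\psi_K^*}$ (Lemma~\ref{lem:psi_rho_composites}) together with the surjection $(\tbar^{[\leq 1]})^K F \oplus \tbar^{[\leq K-1]}F \twoheadrightarrow \tbar^{[\leq K]}F$ (Lemma~\ref{lem:filt_tbar}). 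Your second step, the ``linearization of conjugation'' on generators, is indeed the content of Proposition~\ref{prop:restrict_rho_delta}; but it only establishes the restriction to the $k=1$ piece, and the missing filtration argument is essential.
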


Known structure of $\foutan$ is reflected across the equivalence of Theorem \ref{thm:outer_mu}, as illustrated by the following examples.

\begin{exam}
By Proposition \ref{prop:alpha_^mu}, the functor $\alpha : {}_{\rat \fb}\mathtt{Mod}
\rightarrow 
\flie$ maps to $\f_\lie^\mu$, giving the factorization 
\begin{eqnarray}
\label{eqn:ab_flie_mu}
{}_{\rat \fb}\mathtt{Mod}
\stackrel{\alpha}{\rightarrow} 
\flie^\mu 
\hookrightarrow 
\flie.
\end{eqnarray}

Write $\ab$ for the full subcategory of the category of groups with objects finitely-generated free abelian groups, so that one has $\f_\omega (\ab\op; \rat) $, the category of analytic functors on $\ab\op$, defined analogously to $\f_\omega (\gr\op; \rat)$.  Restriction along the abelianization functor $\gr \rightarrow \ab$ induces  $\f_\omega (\ab\op; \rat) \hookrightarrow \f_\omega (\gr\op; \rat)$ and this clearly factors across the full subcategory of outer functors.

Via Theorem \ref{thm:outer_mu}, (\ref{eqn:ab_flie_mu}) corresponds to 
\[
\f_\omega (\ab\op; \rat) \hookrightarrow \foutanQ \hookrightarrow \f_\omega (\gr\op; \rat).
\]
\end{exam}

\begin{exam}
In \cite{P_analytic}, it is shown that the convolution product $\conv$ on $\flie$ corresponds, via the equivalence of Theorem \ref{thm:grop_analytic}, to the usual tensor product on $\fcatQ[\gr\op]$, restricted to the full subcategory of analytic functors. 

The stability of $\flie^\mu$ under the convolution product $\conv$ given by Proposition \ref{prop:stable_conv} corresponds to the fact that $\foutanQ$ is stable under the tensor product on $ \f_\omega (\gr\op; \rat) \subset \fcatQ[\gr\op]$.
\end{exam}

%%%%%%%%%%%%%%%%%%%%%%%%%%%%%%%%%%%%%%%%%%%%%%%%%%%%%%%%%%%%%%%%
\subsection{From Lie algebras to projective generators}
\label{subsect:proj_flie_mu}

By Proposition \ref{prop:mu_left_adjoint}, 
\begin{eqnarray}
\label{eqn:proj_gen_flie_mu}
\{ \cat \lie (m, -)^\mu \ | \ m \in \nat \}
\end{eqnarray}
 is a set of projective generators of $\flie^\mu$. The purpose of this section is to give a description of these in terms of Lie algebra homology. 

We first note the following analogue of Proposition \ref{prop:proj_cover_flie}, giving a convenient characterization of the objects $\cat \lie (m, -)^\mu$, for $m \in \nat$:

\begin{prop}
\label{prop:proj_cover_flie_mu}
For $m \in \nat$, $\cat \lie (m, -)^\mu$ is the projective cover of $\rat \sym_m$ in $\flie^\mu$.  
\end{prop}

\begin{proof}
This is a consequence of the fact that $\cat \lie (m, -)$ is the projective cover of $\rat \sym_m$ in $\flie$, as shown in  Proposition \ref{prop:proj_cover_flie}. Namely, the surjection $\cat \lie (m, -) \twoheadrightarrow \rat \sym_m$ in $\flie$ induces  a surjection $\cat \lie (m, -)^\mu \twoheadrightarrow \rat \sym_m$ in $\flie^\mu$, since $(-)^\mu$ is right exact; this exhibits $\cat \lie (m, -)^\mu$ as the projective cover of $\rat \sym_m$ in $\flie^\mu$, as in the proof of Proposition \ref{prop:proj_cover_flie}.
\end{proof}

For $\g$ a Lie algebra, one has the associated left $\cat\lie$-module $\underline{\g}$ given by $\underline{\g} (n) = \g ^{\otimes n}$, with morphisms acting via the Lie algebra structure of $\g$ (see \cite[Proposition 5.4.2]{LV}, for example). 

The following interprets $(\underline{\g})^\mu$ in terms of zero degree Lie homology (i.e., coinvariants), where we consider $\g$ as a right $\g$-module for the adjoint action and equip  $\g^{\otimes n}$ with the tensor product structure, for each $n \in \nat$. 

\begin{prop}
\label{prop:underline_g_mu}
For $\g$ a Lie algebra, $(\underline{\g})^\mu \in \ob \flie^\mu$ has values
$
n \mapsto H_0 (\g; \g^{\otimes n}) 
$, 
with $\cat \lie$-module structure induced from that of $\underline{\g}$ via the canonical quotient maps $\g^{\otimes n} \twoheadrightarrow  H_0 (\g; \g^{\otimes n})$.

This identification is natural with respect to the Lie algebra $\g$.
\end{prop}

\begin{proof}
By definition, $(\underline{\g})^\mu$ is  the cokernel of $\mut_{\underline{\g}} : \delta \underline{\g} \rightarrow \underline{\g}$. Evaluated on $n \in \nat$, by definition of $\mut$ and of the structure of $\underline{\g}$, this identifies as 
\[ 
\g^{\otimes n+1} = (\g ^{\otimes n}) \otimes \g 
\rightarrow 
\g^{\otimes n}
\]
given by the above right $\g$-module structure  on $\g^{\otimes n}$. This establishes the first statement.

The naturality with respect to $\g$ is clear.
\end{proof}

\begin{nota}
\label{nota:H0_underline_g_mu}
Denote by  $H_0(\g; \underline{\g})$ the left $\cat \lie$-module $
n \mapsto H_0 (\g; \g^{\otimes n}) 
$
 of  Proposition \ref{prop:underline_g_mu}. 
\end{nota}

We now apply this to study the projective generators of $\flie^\mu$.  For this, since we are working over $\rat$, we  can exploit the Schur correspondence (see \cite[Section 3.4]{P_analytic} for a brief survey and \cite[Appendix I.A]{MR3443860} for further details). For $G$ a right $\rat\fb$-module, the associated Schur functor is the functor on $\fvs$, the category of finite-dimensional $\rat$-vector spaces, given by 
\begin{eqnarray}
\label{eqn:G_schur}
V \mapsto G(V) := \bigoplus_{\ell \in \nat} G(\ell) \otimes_{\sym_\ell} V^{\otimes \ell}.
\end{eqnarray}

For given  $m \in \nat$, the $\sym_m$-module $G(m)$ can be recovered from the Schur functor $V \mapsto G(V)$ by 
\begin{eqnarray}
\label{eqn:G_m}
G(m) \cong 
\nt (V^{\otimes m} , G(V) ),
\end{eqnarray}
where the notation $\nt$ is shorthand for natural transformations as functors of $V$; the $\sym_m$-action on the right hand side is induced by the place permutation action on $V^{\otimes m}$.

\begin{exam}
\label{exam:schur_cat_lie}
For fixed $n \in \nat$, the Schur functor associated to $\cat \lie (-, n)$ (considered as a right $\rat\fb$-module) is the functor $V \mapsto \lie(V)^{\otimes n}$. This can be seen by using the fact that $\cat \lie (-,n)$ is isomorphic as a right $\rat \fb$-module to $\lie ^{\odot n}$ and the fact that the Schur construction is symmetric monoidal. 

 The above generalizes to treat the  right $\rat\fb$-module $t \mapsto \cat \lie (t, -)$ that takes values in left $\cat \lie$-modules. This has associated Schur functor 
\[
V \mapsto \underline{\lie (V)}, 
\]
considered as taking values in left $\cat \lie$-modules. This is natural with respect to $V\in\ob \fvs$. 
 
By the general identification (\ref{eqn:G_m}), for a specific $m$, the left $\cat \lie$-module $\cat \lie (m, -)$ is recovered  by 
\[
\cat \lie (m, -) 
\cong 
\nt (V^{\otimes m}, \underline{\lie (V)} ),
\] 
where the left $\cat \lie$-module structure is induced by that of $\underline{\lie (V)}$. 
\end{exam}

Theorem \ref{thm:catliemu} below adapts Example \ref{exam:schur_cat_lie} to the case of the set of projective generators (\ref{eqn:proj_gen_flie_mu}) of $\flie^\mu$. This uses the following:

\begin{lem}
\label{lem:fbop_mod_proj_gen_flie_mu}
The projective generators (\ref{eqn:proj_gen_flie_mu}) of $\flie^\mu$   form a  right $\rat\fb$-module taking values in left $\cat \lie$-modules, namely 
$
t \mapsto \cat \lie (t, -)^\mu.
$
\end{lem}

\begin{proof}
An immediate consequence of the fact that $t \mapsto \cat \lie (t, -)$ is a right $\rat\fb$-module taking values in left $\cat \lie$-modules and that $(-)^\mu$ is a functor.
\end{proof}

\begin{thm}
\label{thm:catliemu}
The Schur functor associated to $t \mapsto \cat \lie (t, -)^\mu$ is 
\[
V \mapsto H_0 (\lie(V); \underline{\lie (V)} ),
\]
 taking values in left $\cat \lie$-modules.

Hence, for $m \in \nat$, the projective cover $\cat \lie (m, -)^\mu$ of $\rat \sym_m$ in $\flie^\mu$ is isomorphic to the left $\cat \lie$-module:
\[
\nt (V^{\otimes m} , H_0 (\lie(V); \underline{\lie (V)} ) ).
\]
\end{thm}

\begin{proof}
The formation of the Schur functor (\ref{eqn:G_schur}) is an exact functor from right $\rat\fb$-modules to functors defined on finite-dimensional $\rat$-vector spaces. One deduces that this functor commutes with applying $(-)^\mu$ when considering right $\rat\fb$-modules with values in left $\cat \lie$-modules. 

Proposition \ref{prop:underline_g_mu} provides the natural  isomorphism 
\[
\underline{\lie (V)}^\mu \cong H_0 (\lie(V); \underline{\lie (V)} ).
\]
Combined with the identification given in Example \ref{exam:schur_cat_lie}, this gives the first statement. 

The second statement follows immediately from the general identification (\ref{eqn:G_m}) applied with $G$ the right $\rat\fb$-module of Lemma \ref{lem:fbop_mod_proj_gen_flie_mu}.
\end{proof}

\begin{rem}
Concentrating on homological degree zero gives only part of the story; the full structure underlying $H_* (\lie (V); \underline{\lie(V)})$ is explained in \cite{P_Hcatlie}.
\end{rem}

 \section{The proof of Theorem \ref{thm:outer_mu}}
 \label{sect:proof_lie_outer}

 This section gives the proof of Theorem \ref{thm:outer_mu}; the result is restated more explicitly below as Theorem \ref{thm:outer_mu_explicit} and then proved. The proof relies upon comparing the  definition of $\flie^\mu$ (Definition \ref{defn:fopd^mu}) with the characterization of $\foutanQ$ given by Proposition \ref{prop:fout_equivalent}.

%%%%%%%%%%%%%%%%%%%%%%%%%%%%%%%%%%%%%%%%%%%%%%%%%%%%%%%%%%%%%%%%%%%%%
\subsection{Identifying $\rho$}

The purpose of this section is to identify the natural transformation $\rbar : \tbar F \rightarrow F$ when $F$ is an analytic functor, using the equivalence of Theorem \ref{thm:grop_analytic}. 

\begin{lem}
\label{lem:identify_tau}
There is a natural equivalence of left $\rat\gr\op$, right $\cat\lie$ bimodules:
\[
\tau (\Phi (U \lie)) \cong \Phi (U\lie) \rconv U \lie,
\]
where $\rat\gr\op$ acts on the right hand side via its action on $\Phi (U \lie)$. 

Hence, for $M \in \ob \flie$, there is a natural isomorphism in $\f_\omega (\gr\op; \rat)$:
\[
\tau \big(\Phi (U \lie) \otimes_{\cat\lie} M \big) \cong  \big(\Phi (U\lie) \rconv U \lie \big) \otimes_{\cat\lie} M.
\]
\end{lem}

\begin{proof}
By construction of the exponential functor $\Phi (U \lie)$, one has for any $n \in \nat$
$$
\Phi (U \lie) (\zed^{\star n} \star \zed)=(U \lie) ^{\rconv n} \rconv U \lie
= \Phi (U\lie)(\zed^{\star n}) \rconv U \lie
,$$ 
where the final factor of $U \lie$ is constant (i.e., invariant under morphisms of $\gr\op$ of the form $f \star \id_\zed$).
From this, one deduces the first statement.
 
The second statement is an immediate consequence. 
\end{proof}

One can identify the natural transformation $\rho$ with respect to the identifications of Lemma \ref{lem:identify_tau}. For clarity, we first consider the case of an exponential functor $\Phi H$, where $H$ is a cocommutative Hopf algebra over $\rat$, using the material of \cite[Section 12]{PV}. For current purposes, $H$ can be taken to be primitively-generated; this ensures that $\Phi H$ is an analytic functor. Moreover, as in Lemma \ref{lem:identify_tau}, $\tau  \Phi H  \cong   \Phi H \otimes H$ in $\f(\gr\op; \rat)$. 

Now, $H$ is an $H$-bimodule for the left and right regular actions, hence can be considered as a right $H$-module with respect to the diagonal structure, i.e., for the right conjugation action $\ad : H \otimes H \rightarrow H$, 
\[
\ad \ : \ 
\tilde{h} \otimes h \mapsto \chi (h') \tilde{h} h''
\]
where $\Delta h = h' \otimes h''$ (using Sweedler notation) and $\chi$ is the Hopf algebra conjugation.

Thus, for each $n \in \nat$, one has the associated diagonal $H$-module structure  $\ad : (H^{\otimes n}) \otimes H \rightarrow H^{\otimes n}$. This induces a natural transformation:
\[
\ad_H : (\Phi H) \otimes H \rightarrow \Phi H
\]
in $\f (\gr\op; \rat)$ and, under the isomorphism, $\tau (\Phi H) \cong (\Phi H) \otimes H$, this identifies with $\rho_{\Phi H}$ (see \cite{PV}). 

\begin{rem}
This adjoint action is used by Conant and Kassabov in \cite{MR3546467}.
\end{rem}

The above considerations apply {\em mutatis mutandis} to  the cocommutative Hopf algebra $U \lie$ in $(\rmod, \rconv, \rat)$. This yields the natural transformation of left $\rat\gr\op$, right $\cat\lie$ bimodules
\[
\ad : (\Phi (U \lie)) \rconv U \lie \rightarrow \Phi (U \lie).
\]
This identifies with $\rho : \tau \big( \Phi (U \lie) \big) \rightarrow \Phi (U \lie)$ via the isomorphism of Lemma \ref{lem:identify_tau}.
 
\begin{rem}
Restricting the right $\cat\lie$-module structure to a right $\rat \fb$-structure, one can pass from $\ad$ to the morphism of associated Schur functors, which is a natural transformation of functors on $\gr\op$, natural with respect to $V \in \ob \fvs$: 
\[
\ad (V) \ : \  
\big(\Phi (U \lie) \rconv U \lie \big) (V) \rightarrow \big(\Phi (U \lie)\big) (V). 
\]
Now, $(U \lie)(V)$ is naturally isomorphic to $U (\lie (V))$, where $\lie(V)$ is the free Lie algebra on $V$; the Hopf algebra $U (\lie (V))$ is naturally isomorphic to the (primitively-generated) tensor Hopf algebra $T(V)$.   One deduces that $\big(\Phi (U \lie)\big) (V)$ is naturally isomorphic to $\Phi (U \lie(V)) \cong \Phi (T(V))$ as functors on $\gr\op$, naturally in $V$.

It follows that $\ad (V)$ identifies with 
\[
\ad_{T(V)} \ : \  \Phi (T(V)) \otimes T(V) \rightarrow \Phi (T(V)),
\]
defined as above, taking $H=T(V)$.
\end{rem}

Putting this together, one obtains:

\begin{prop}
\label{prop:identify_rho}
For $M \in \ob \flie$, the natural transformation $\rho : \tau \big(\Phi (U \lie) \otimes_{\cat\lie} M \big)
\rightarrow \Phi (U \lie) \otimes_{\cat\lie} M $ identifies, via the isomorphism of Lemma \ref{lem:identify_tau}, with 
\[
\ad \otimes \id_M \ : \  
\big( (\Phi (U \lie)) \rconv U \lie \big) \otimes_{\cat\lie} M 
\rightarrow 
 \Phi (U \lie) \otimes_{\cat\lie} M .
\]
\end{prop}

%%%%%%%%%%%%%%%%%%%%%%%%%%%%%%%%%%%%%%%%%%%%%%%%%%%%%%%%%%%%%%%%%%%%
\subsection{The reduced case}
Proposition \ref{prop:identify_rho} adapts to treat the reduced map $\rbar$ as follows.

\begin{nota}
\label{nota:ULie_bar}
Denote by $\overline{U \lie}$ the kernel of the augmentation $U \lie \rightarrow \rat$, where $\rat$ is considered as a right $\cat \lie$-module supported on $0$. 
\end{nota}

\begin{rem}
On passage to the associated Schur functors, $\overline{U \lie} (V)$ identifies as  the augmentation ideal of $U \lie (V) \cong T(V)$. 
\end{rem}

The following is clear:

\begin{lem}
\label{lem:split_ULie}
There is a canonical splitting 
$
U\lie \cong \overline{U\lie} \oplus \rat $ of right $\cat\lie$-modules. 
Moreover, the canonical inclusion $\lie \hookrightarrow U \lie$ maps to  $\overline{U \lie}$.
\end{lem}

One checks directly that the isomorphism of Lemma \ref{lem:identify_tau} restricts to 
\begin{eqnarray}
\label{eqn:tbar_Phi}
\tbar (\Phi (U \lie)) \cong \Phi (U\lie) \rconv \overline{U \lie}.
\end{eqnarray}
and hence that $\rbar : \tbar (\Phi (U \lie)) \rightarrow \Phi (U \lie)$ identifies with the following restriction of $\ad$:
\[
\overline{\ad} \ : \ (\Phi (U \lie)) \rconv \overline{U \lie} \rightarrow \Phi (U \lie).
\]

Then Proposition \ref{prop:identify_rho} gives:

\begin{cor}
\label{cor:identify_rbar}
For $M \in \ob \flie$, the natural transformation $\rbar : \tbar \big(\Phi (U \lie) \otimes_{\cat\lie} M \big)
\rightarrow \Phi (U \lie) \otimes_{\cat\lie} M $ identifies with 
\[
\overline{\ad} \otimes \id_M \ : \  
\big( (\Phi (U \lie)) \rconv \overline{U \lie} \big) \otimes_{\cat\lie} M 
\rightarrow 
 \Phi (U \lie) \otimes_{\cat\lie} M .
\]
\end{cor}

%%%%%%%%%%%%%%%%%%%%%%%%%%%%%%%%%%%%%%%%%%%%%%%%%%%%%%%%%%%%%%%%%%%%%%
\subsection{Comparing $\rbar$ and $\mut$}

By Proposition \ref{prop:mut_univ_example}, for $M \in \ob \flie$, the natural transformation $\mut : \delta M \rightarrow M$ identifies with 
\[
\mut \otimes \id_M \ : \ (\delta \cat\lie ) \otimes_{\cat\lie} M \rightarrow M,
\]
induced by the universal example $\mut : \delta \cat\lie \rightarrow \cat\lie$.

We require to understand the natural transformation that is obtained on applying the equivalence of categories $\Phi (U \lie) \otimes_{\cat\lie} - \ : \ \flie \rightarrow \f_\omega (\gr\op ; \rat)$. 

\begin{lem}
\label{lem:delta_cat_lie_grop}
There is a natural isomorphism of left $\rat\gr\op$, right $\cat\lie$ bimodules
\[
\Phi (U \lie) \otimes_{\cat\lie} \delta \cat\lie 
\cong 
\Phi (U \lie) \rconv \lie.
\]

Hence, for $M \in \ob \flie$, via the isomorphism $\delta M \cong (\delta \cat\lie) \otimes_{\cat\lie} M$ of Proposition \ref{prop:delta_universal}, there is a natural isomorphism:
\[
\Phi (U \lie) \otimes_{\cat\lie} \delta M
\cong 
(\Phi (U \lie) \rconv \lie)  \otimes_{\cat\lie} M.
\]
\end{lem}

\begin{proof}
The first statement follows from the isomorphism $\delta \cat\lie \cong \cat\lie \rconv \lie$ given by Proposition \ref{prop:delta_catopd_bimodule} by checking that $\Phi (U \lie) \otimes_{\cat\lie} (\cat\lie \rconv \lie)$ is isomorphic to $ 
\Phi (U \lie) \rconv \lie$ as a left $\rat\gr\op$, right $\cat\lie$ bimodule. This can be proved by using the description of $\delta \cat \lie$ given in Proposition \ref{prop:delta_proj} and then  checking the right $\cat\lie$-module structure.

One has the natural isomorphisms
\[
\Phi (U \lie) \otimes_{\cat\lie} \delta M
\cong 
\Phi (U \lie) \otimes_{\cat\lie} \big (\delta \cat\lie \otimes_{\cat\lie} M \big) 
\cong 
(\Phi (U \lie) \otimes_{\cat\lie} \delta \cat\lie)  \otimes_{\cat\lie} M, 
\]
where the first is given by Proposition \ref{prop:delta_universal} and the second by associativity of the tensor product. The second statement then follows from the first.
\end{proof}

The required compatibility result is the following:

\begin{prop}
\label{prop:compat_mut_ad}
The following diagram in left $\rat\gr\op$, right $\cat\lie$ bimodules commutes:
\[
\xymatrix{
\Phi (U \lie) \otimes_{\cat\lie} (\delta \cat\lie) 
\ar[rr]^(.6){\id \otimes \mut} 
\ar[d]_\cong 
&&
\Phi (U\lie) 
\ar@{=}[dd]
\\
\Phi (U \lie) \rconv \lie
\ar@{^(->}[d]
\\
\Phi (U \lie) \rconv \overline{U \lie}
\ar[rr]_{\overline{\ad}}
&&
\Phi (U \lie),
}
\]
in which the isomorphism is given by Lemma \ref{lem:delta_cat_lie_grop} and the inclusion is induced by the canonical inclusion $\lie \hookrightarrow \overline{U \lie}$ of right $\cat\lie$-modules.
\end{prop}

\begin{proof}
It suffices to show that, for each $n \in \nat$, the diagram commutes when evaluated on $\zed^{\star n}$. The diagram identifies as:
\[
\xymatrix{
(U \lie)^{\rconv n} \rconv \lie 
\ar[rr]
\ar@{^(->}[d]
&&
(U \lie)^{\rconv n}
\ar@{=}[d]
\\
(U \lie)^{\rconv n} \rconv \overline{U\lie} 
\ar[rr]_{\ad}
&&
(U \lie)^{\rconv n}
}
\]
in which the top map is induced by $\mut$. This is a diagram of right $\cat\lie$-modules.

Now, by construction $U \lie$ is generated under the associative product by $\lie$. This associative product is encoded in the structure of the exponential functor $\Phi (U \lie)$: explicitly,  applying $\Phi (U \lie)$ to the cogroup structure morphism $\zed \rightarrow \zed \star \zed$ of $\zed$ induces the product. Thus, by using the functoriality of $\Phi (U \lie)$ with respect to $\gr\op$, one can show that it suffices to establish that the diagram commutes when restricted to $\lie^{\rconv n} \rconv \lie \subset (U \lie)^{\rconv n} \rconv \lie$, where the inclusion is induced by $\lie \subset U \lie$. 

This reduces to showing the commutativity of the following diagram
\[
\xymatrix{
\lie^{\rconv n} \rconv \lie 
\ar[rr]^{\mut}
\ar@{^(->}[d]
&&
\lie^{\rconv n}
\ar@{^(->}[d]
\\
(U \lie)^{\rconv n} \rconv \overline{U\lie}
\ar[rr]_{\overline{\ad}}
&&
(U \lie)^{\rconv n}.
}
\]
The commutativity follows from  the relationship between the adjoint representation of $\lie$ and the conjugation action of $U \lie$ on itself, together with its extension to the `tensor' products. Indeed, it suffices to check the commutativity of the diagram after passage to the associated Schur functors. The diagram then becomes
\[
\xymatrix{
\lie(V) ^{\otimes n} \otimes  \lie(V) 
\ar[rr]
\ar@{^(->}[d]
&&
\lie(V)^{\otimes n}
\ar@{^(->}[d]
\\
(U \lie(V))^{\otimes n} \otimes \overline{U\lie}(V) 
\ar[rr]_{\overline{\ad}(V)}
&&
(U \lie (V))^{\otimes n},
}
\]
where the top horizontal map is the action of $\lie (V)$ on $\lie(V)^{\otimes n}$ given by the tensor product of the adjoint representation. Commutativity is checked directly. 
\end{proof}

\begin{cor}
\label{cor:mut_rho}
For $M \in \ob \flie$, the following diagram in $\f_\omega (\gr\op; \rat)$ commutes:
\[
\xymatrix{
\Phi (U\lie) \otimes_{\cat\lie} \delta M 
\ar[rr]^{\id \otimes \mut}
\ar[d]_\cong 
&&
\Phi (U\lie) \otimes_{\cat\lie} M
\ar@{=}[dd]
\\ 
(\Phi (U \lie) \rconv \lie)  \otimes_{\cat\lie} M
\ar@{^(->}[d]
\\
(\Phi (U \lie) \rconv \overline{U \lie})  \otimes_{\cat\lie} M
\ar[rr]_{\overline{\ad} \otimes \id}
&&
\Phi (U\lie) \otimes_{\cat\lie} M,
}
\]
in which the isomorphism is given by Lemma \ref{lem:delta_cat_lie_grop} and the bottom left hand vertical map is injective. 
\end{cor}

\begin{proof}
The commutativity of the diagram follows from that of the `universal example', given by Proposition \ref{prop:compat_mut_ad}, in conjunction with the identification of $\mut$ given by Proposition \ref{prop:mut_univ_example} and using the identification of Lemma \ref{lem:delta_cat_lie_grop} for the top left corner.

The injectivity of the indicated map is a consequence of the fact that the inclusion $\lie \hookrightarrow \overline{U \lie}$ admits a retract in right $\cat\lie$-modules, which follows from the Poincaré-Birkhoff-Witt theorem, interpreted at the level of operads.
\end{proof}

\begin{rem}
\label{rem:identify_rbar}
\ 
\begin{enumerate}
\item 
The bottom horizontal map of the diagram in the statement of Corollary \ref{cor:mut_rho} identifies with $\rbar : \tbar \big(\Phi (U\lie) \otimes_{\cat\lie} M\big) \rightarrow \Phi (U\lie) \otimes_{\cat\lie} M$ using the natural isomorphism 
\[
\tbar \big(\Phi (U\lie) \otimes_{\cat\lie} M\big)
\cong 
\big( \Phi (U\lie) \rconv \overline{U \lie}\big) \otimes_{\cat\lie} M
\]
induced by (\ref{eqn:tbar_Phi}), together with Corollary \ref{cor:identify_rbar}.
\item 
The injectivity statement is not required below. It is included since it is of independent interest.
\end{enumerate}
\end{rem}

%%%%%%%%%%%%%%%%%%%%%%%%%%%%%%%%%%%%%%%%%%%%%%%%%%%%%%%%%%%%%%%%%%%%%%%%
\subsection{The proof}

Theorem \ref{thm:outer_mu} follows from the more explicit result:

\begin{thm}
\label{thm:outer_mu_explicit}
For $M \in \ob \flie$, the following are equivalent:
\begin{enumerate}
\item 
$M$ belongs to $\flie^\mu$; 
\item 
the analytic functor $\Phi (U \lie) \otimes _{\cat\lie} M$ belongs to $\foutanQ$. 
\end{enumerate}
\end{thm}

\begin{proof}
By definition the two conditions in the  statement are equivalent respectively to:
\begin{enumerate}
\item 
$\mut : \delta M \rightarrow M$ is zero; 
\item 
$\rbar : \tbar \big (\Phi (U \lie) \otimes _{\cat\lie} M \big) \rightarrow \Phi (U \lie) \otimes _{\cat\lie} M $ is zero.
\end{enumerate}
These conditions are compared by using the results of the preceding subsections.

Using the identification observed in Remark \ref{rem:identify_rbar}, the outer commutative square of Corollary \ref{cor:mut_rho} can be written as:
\[
\xymatrix{
\Phi (U\lie) \otimes_{\cat\lie} \delta M 
\ar[rr]^{\id \otimes \mut}
\ar[d]
&&
\Phi (U\lie) \otimes_{\cat\lie} M
\ar@{=}[d]
\\
\tbar \big (\Phi (U \lie) \otimes _{\cat\lie} M \big)
\ar[rr]_\rbar
&&
\Phi (U \lie) \otimes _{\cat\lie} M.
}
\]

Under the equivalence of Theorem \ref{thm:grop_analytic}, the top horizontal map is zero if and only if $\mut$ is.
 It follows  that, if $\rbar$ is zero, then so is $\mut$. 

It remains to prove the converse. For this we work with the (equivalent) outer square as in Corollary \ref{cor:mut_rho}:
\[
\xymatrix{
\Phi (U\lie) \otimes_{\cat\lie} \delta M 
\ar[rr]^{\id \otimes \mut}
\ar[d]
&&
\Phi (U\lie) \otimes_{\cat\lie} M
\ar@{=}[d]
\\
(\Phi (U \lie) \rconv \overline{U \lie})  \otimes_{\cat\lie} M
\ar[rr]_\rbar
&&
\Phi (U \lie) \otimes _{\cat\lie} M.
}
\]

Now, the argument used to deduce Corollary \ref{cor:mut_rho} from Proposition \ref{prop:compat_mut_ad} shows that
the above commutative diagram is obtained by applying $-\otimes_{\cat\lie} M $ to the commutative diagram
\[
\xymatrix{
(\Phi U \lie) \rconv \lie 
\ar[rr]^{\ad|_\lie}
\ar[d]
&&
\Phi U \lie 
\ar@{=}[d]
\\
(\Phi U \lie) \rconv \overline{U  \lie} 
\ar[rr]_{\overline{\ad}}
&&
\Phi U \lie ,
}
\]
where the left hand vertical arrow is induced by the inclusion $\lie \hookrightarrow \overline{U \lie}$. 

The key observation is that the images of the horizontal maps are the same:
\[
\mathrm{Image} (\ad|_\lie) = 
\mathrm{Image} (\overline{\ad})
\]
as sub right $\cat\lie$-modules of $\Phi U \lie$. This follows  from the fact that $\lie$ generates $U \lie$ as a unital, associative algebra in $(\rmod, \rconv , \rat)$. 

Hence the above commutative diagram gives the commutative diagram:
\[
\xymatrix{
(\Phi U \lie) \rconv \lie 
\ar@{->>}[r]^(.6){\ad|_\lie}
\ar[d]
&
\mathrm{Image}
\ar@{^(->}[r]
\ar@{=}[d]
&
\Phi U \lie 
\ar@{=}[d]
\\
(\Phi U \lie) \rconv \overline{U  \lie} 
\ar@{->>}[r]_(.6){\overline{\ad}}
&
\mathrm{Image}
\ar@{^(->}[r]
&
\Phi U \lie, 
}
\]
writing $\mathrm{Image}$ for the common image.

On applying $-\otimes_{\cat\lie} M$ one obtains that the vanishing of the top row is equivalent to that of the bottom row, since both these conditions are equivalent to the map 
\[
\mathrm{Image} \otimes_{\cat\lie} M 
\rightarrow 
(\Phi U \lie) \otimes_{\cat\lie} M
\]
being zero, using the fact that $-\otimes_{\cat\lie} M$ is right exact. This concludes the proof.
\end{proof}

%\nocite{*}
%\bibliographystyle{amsalpha}
%\bibliography{out.bib}
\providecommand{\bysame}{\leavevmode\hbox to3em{\hrulefill}\thinspace}
\providecommand{\MR}{\relax\ifhmode\unskip\space\fi MR }
% \MRhref is called by the amsart/book/proc definition of \MR.
\providecommand{\MRhref}[2]{%
  \href{http://www.ams.org/mathscinet-getitem?mr=#1}{#2}
}
\providecommand{\href}[2]{#2}

\end{document}